\documentclass[12pt,english]{article}
\usepackage[T1]{fontenc}
\usepackage[latin9]{inputenc}
\usepackage{geometry}
\geometry{verbose,lmargin=3.5cm,rmargin=3.5cm}
\usepackage{amsmath}
\usepackage{amsthm}
\usepackage{amssymb}

\makeatletter
\numberwithin{equation}{section}
\theoremstyle{plain}
\newtheorem{thm}{\protect\theoremname}
  \theoremstyle{remark}
  \newtheorem{rem}{\protect\remarkname}
  \theoremstyle{plain}
  \newtheorem{cor}{\protect\corollaryname}
  \theoremstyle{plain}
  \newtheorem{lem}{\protect\lemmaname}
  \theoremstyle{definition}
  \newtheorem{defn}{\protect\definitionname}
 \theoremstyle{definition}
  \newtheorem{example}{\protect\examplename}

\date{}

\makeatother

\usepackage{babel}
  \providecommand{\definitionname}{Definition}
  \providecommand{\examplename}{Example}
  \providecommand{\lemmaname}{Lemma}
  \providecommand{\remarkname}{Remark}
\providecommand{\corollaryname}{Corollary}
\providecommand{\theoremname}{Theorem}

\begin{document}

\title{A Non-vanishing Property for the Signature of a Path}
\author{Horatio Boedihardjo\thanks{Department of Mathematics and Statistics, University of Reading,
Reading RG6 6AX, United Kingdom. Email: h.s.boedihardjo@reading.ac.uk.}, Xi Geng\thanks{Department of Mathematical Sciences, Carnegie Mellon University, Pittsburgh
PA 15213, United States. Email: xig@andrew.cmu.edu.}}
\maketitle

\begin{abstract}
We prove that a continuous path with finite length in a real Banach
space cannot have infinitely many zero components in its signature unless it is tree-like.
In particular, this allows us to strengthen a limit theorem for signature recently
proved by Chang, Lyons and Ni. What lies at the heart of our proof is a complexification idea together with deep results from holomorphic polynomial approximations in the theory of several complex variables.
\end{abstract}

\section{Introduction and main result}

In the seminal work of Hambly and Lyons \cite{HL10} in 2010, it was
shown that the \textit{signature} of a continuous path $w:[0,T]\rightarrow\mathbb{R}^{d}$
with finite length, which is the collection 
\[
\left\{ \int_{0<t_{1}<\cdots<t_{n}<T}dw_{t_{1}}\otimes\cdots\otimes dw_{t_{n}}:n\geqslant1\right\} 
\]
of global iterated integrals of all orders, uniquely determines the
path $w$ up to a tree-like equivalence (heuristically, a path is
tree-like if it goes out and reverses back along itself). In particular,
there is a unique \textit{tree-reduced} path (i.e. not containing
any tree-like pieces) up to reparametrization with minimal length
in each tree-like equivalence class with given signature. Since then,
it has been conjectured that the length $L$ of a tree-reduced path
$w$ can be recovered from (the tail asymptotics of) its signature:
\begin{equation}
L=\limsup_{n\rightarrow\infty}\left\Vert n!\int_{0<t_{1}<\cdots<t_{n}<T}dw_{t_{1}}\otimes\cdots\otimes dw_{t_{n}}\right\Vert _{{\rm proj}}^{\frac{1}{n}},\label{eq: length conjecture}
\end{equation}
where $\|\cdot\|_{{\rm proj}}$ is the projective tensor norm on the
tensor product. This conjecture was proved by Hambly-Lyons \cite{HL10}
for $C^{1}$-paths (a stronger asymptotic result was obtained in this
case) and piecewise linear paths, and it remains open for general
bounded variation paths. 

In a recent work of Chang, Lyons and Ni \cite{CLN18} (see also \cite{CLN18C}),
under reasonable tensor algebra norms, it was shown that the right
hand side of (\ref{eq: length conjecture}) is indeed a limit when
$n$ is taken over degrees at which the signature is nonzero. To be
precise, let $V$ be a real Banach space and $V^{\otimes_{a}n}$ ($n\geqslant1$)
be the algebraic tensor products. Recall from \cite{CLN18} that a
sequence of tensor norms $\|\cdot\|_{V^{\otimes_{a}n}}$ are call
\textit{reasonable tensor algebra norms} if\\
\\
(i) $\|\xi\otimes\eta\|_{V^{\otimes_{a}(m+n)}}\leqslant\|\xi\|_{V^{\otimes_{a}m}}\cdot\|\eta\|_{V^{\otimes_{a}n}}$
for $\xi\in V^{\otimes_{a}m}$, $\eta\in V^{\otimes_{a}n}$;
\\
(ii) $\|\phi\otimes\psi\|\leqslant\|\phi\|\cdot\|\psi\|$ for $\phi\in(V^{\otimes_{a}m})^{*}$,
$\psi\in(V^{\otimes_{a}n})^{*}$, where the norms are the induced
dual norms;
\\
(iii) $\|P^{\sigma}\xi\|_{V^{\otimes_{a}n}}=\|\xi\|_{V^{\otimes_{a}n}}$
for $\xi\in V^{\otimes_{a}n}$ and $\sigma$ being a permutation of
order $n$, where $P^{\sigma}$ is the induced permutation operator
on $n$-tensors.\\
\\
It can be shown (c.f. Diestel and Uhl \cite{DU77}) that the inequalities
in (i) and (ii) are automatically equalities. The completion of $V^{\otimes_{a}n}$
under $\|\cdot\|_{V^{\otimes_{a}n}}$ is denoted as $(V^{\otimes n},\|\cdot\|_{V^{\otimes n}})$.
Examples of reasonable tensor norms include the projective, injective
and Hilbert-Schmidt tensor norms. Throughout the rest of this article,
we will always fix a choice of reasonable tensor algebra norms.
The main result of \cite{CLN18} can be stated as follows\footnote{Indeed, in \cite{CLN18} the authors claimed the convergence as $n\rightarrow\infty$
without further restrictions. However, a careful examination of the
proof suggests that the convergence was only proved along degrees
at which the signature is nonzero. This was corrected in the corrigendum
\cite{CLN18C} of \cite{CLN18}. Theorem \ref{thm: CLN theorem} stated
above is the corrected version.}. 
\begin{thm}
\label{thm: CLN theorem}Let $w:[0,T]\rightarrow V$ be a continuous
path with finite length, and let $g=(1,g_{1},g_2,\cdots)$
be the signature of $w,$ i.e. 
\[
g_{n}\triangleq\int_{0<t_{1}<\cdots<t_{n}<T}dw_{t_{1}}\otimes\cdots\otimes dw_{t_{n}}\in V^{\otimes n},\ \ \ n\geqslant1.
\]
Define $N(g)$ to be the set of positive integers $n$ such that $g_{n}\neq0$.
Then 
\[
\lim_{\substack{n\rightarrow\infty\\
n\in N(g)
}
}\|n!g_{n}\|_{V^{\otimes n}}^{\frac{1}{n}}=\sup_{n\geqslant1}\|n!g_{n}\|_{V^{\otimes n}}^{\frac{1}{n}}.
\]
\end{thm}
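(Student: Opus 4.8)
The plan is to reduce the identity to a super-multiplicativity property of the normalised coefficients $a_{n}\triangleq\|n!\,g_{n}\|_{V^{\otimes n}}$ and then run a Fekete-type argument on a sub-semigroup of $(\mathbb{N},+)$. First, the standard factorial decay $\|g_{n}\|_{V^{\otimes n}}\leqslant L^{n}/n!$ — where $L$ is the length of $w$, obtained by subdividing the simplex $\Delta_{n}$ and iterating axiom (i) — gives $\rho\triangleq\sup_{n\geqslant1}a_{n}^{1/n}\leqslant L<\infty$; if $\rho=0$ then every $g_{n}$ vanishes, $w$ is tree-like and the statement is trivial, so assume $\rho>0$. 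Then $N(g)\neq\emptyset$, and since $a_{n}=0$ for $n\notin N(g)$ we have $\sup_{n\geqslant1}a_{n}^{1/n}=\sup_{n\in N(g)}a_{n}^{1/n}=\rho$. As $\limsup_{n\in N(g)}a_{n}^{1/n}\leqslant\rho$ is immediate, the whole content of the theorem is the inequality $\liminf_{n\in N(g)}a_{n}^{1/n}\geqslant\rho$.

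The key step is $a_{m}a_{n}\leqslant a_{m+n}$ for all $m,n\geqslant1$. I would deduce this from the shuffle relation for iterated integrals: for continuous functionals $\phi\in(V^{\otimes m})^{*}$ and $\psi\in(V^{\otimes n})^{*}$,
\[
\langle g_{m},\phi\rangle\,\langle g_{n},\psi\rangle=\Big\langle g_{m+n},\ \sum_{\sigma}(P^{\sigma})^{*}(\phi\otimes\psi)\Big\rangle,
\]
the sum running over the $\binom{m+n}{m}$ many $(m,n)$-shuffle permutations $\sigma$ of order $m+n$; on rank-one functionals $\phi=f_{1}\otimes\cdots\otimes f_{m}$ this is the classical shuffle identity for the scalar bounded-variation paths $f_{i}(w_{\cdot})$, $f_{i}\in V^{*}$, and the general case follows by bilinearity and density. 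By axiom (iii) each $P^{\sigma}$ is an isometry of $V^{\otimes(m+n)}$, hence so is each adjoint $(P^{\sigma})^{*}$ on the dual, and with axiom (ii) this yields $\|(P^{\sigma})^{*}(\phi\otimes\psi)\|\leqslant\|\phi\|\,\|\psi\|$; summing, $|\langle g_{m},\phi\rangle\langle g_{n},\psi\rangle|\leqslant\binom{m+n}{m}\|g_{m+n}\|\,\|\phi\|\,\|\psi\|$. Taking the supremum over the dual unit balls gives $\|g_{m}\|\,\|g_{n}\|\leqslant\binom{m+n}{m}\|g_{m+n}\|$, which is exactly $a_{m}a_{n}\leqslant a_{m+n}$ after multiplying by $m!\,n!$. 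In particular $N(g)$ is closed under addition, hence a sub-semigroup $S$ of $(\mathbb{N},+)$.

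Finally I would run Fekete's argument on $S$. Let $d\triangleq\gcd(S)$; elementarily, a sub-semigroup of $\mathbb{N}$ satisfies $S\subseteq d\mathbb{N}$ and $S\supseteq\{kd:k\geqslant k_{0}\}$ for some $k_{0}$ (the Chicken McNugget / numerical-semigroup fact). Fix $\varepsilon>0$ and pick $m\in S$ with $a_{m}^{1/m}>\rho-\varepsilon$. For $n\in S$ large, write $n=km+r$ with $k=\lfloor(n-k_{0}d)/m\rfloor$, so that $k\geqslant1$, $k/n\to1/m$, and $r$ lies in the bounded window $[k_{0}d,\,k_{0}d+m)$; since $d\mid n$ and $d\mid m$ we get $d\mid r$, hence $r\in S$, and $a_{r}$ ranges over a finite set of positive reals. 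As $km=m+\cdots+m\in S$, super-multiplicativity gives $a_{n}\geqslant a_{km}a_{r}\geqslant a_{m}^{k}a_{r}$, so $a_{n}^{1/n}\geqslant a_{m}^{k/n}a_{r}^{1/n}$. Letting $n\to\infty$ along $S$, we have $a_{m}^{k/n}\to a_{m}^{1/m}$ and $a_{r}^{1/n}\to1$, whence $\liminf_{n\in S}a_{n}^{1/n}\geqslant a_{m}^{1/m}>\rho-\varepsilon$; letting $\varepsilon\downarrow0$ finishes the proof.

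The main obstacle I anticipate is the super-multiplicativity step, and within it the careful justification of the shuffle relation — together with checking that axiom (iii) passes to the dual norms — in a general, possibly infinite-dimensional, Banach space, including the density argument reducing the bound to rank-one functionals. Once $a_{m}a_{n}\leqslant a_{m+n}$ is established, the semigroup bookkeeping in the last paragraph is entirely routine.
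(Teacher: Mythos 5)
You should note at the outset that the paper itself offers no proof of Theorem \ref{thm: CLN theorem}: it is quoted from Chang--Lyons--Ni \cite{CLN18,CLN18C} (with the footnote explaining the corrigendum), so there is no internal argument to compare against. Your proposal is, in substance, a correct reconstruction of that cited proof. The super-multiplicative inequality $m!\,n!\,\|g_{m}\|_{V^{\otimes m}}\|g_{n}\|_{V^{\otimes n}}\leqslant(m+n)!\,\|g_{m+n}\|_{V^{\otimes(m+n)}}$ is exactly the consequence of the shuffle formula and the reasonable-norm axioms that the paper itself invokes in the proof of Lemma \ref{lem: vanishing lemma}, and your numerical-semigroup Fekete argument is sound and handles precisely the point of the corrigendum: with $d=\gcd N(g)$ one has $N(g)\subseteq d\mathbb{N}$ and $N(g)\supseteq\{kd:k\geqslant k_{0}\}$, the remainder $r=n-km$ stays in a bounded window inside $N(g)$, so $a_{r}^{1/n}\rightarrow1$ and $a_{m}^{k/n}\rightarrow a_{m}^{1/m}$, giving $\liminf_{n\in N(g)}a_{n}^{1/n}\geqslant\sup_{n}a_{n}^{1/n}$, while the reverse inequality is trivial.

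The one step I would not let stand as written is the justification of the pairing identity ``by bilinearity and density'': rank-one (indeed finite-rank) functionals need not be norm-dense in $(V^{\otimes m})^{*}$ for a general reasonable crossnorm on a general Banach space (for the projective norm the dual consists of all bounded multilinear forms, in which finite-rank forms are not dense in general), so the passage from elementary $\phi,\psi$ to arbitrary ones is not a density argument. The repair is easy and is effectively what the paper does: take the shuffle product formula (\ref{eq: shuffle product formula}) as the known tensor-level identity (group-likeness of the signature of a bounded-variation path, proved by the standard Fubini/Riemann-sum argument in the completed tensor products), and then either estimate norms directly --- $\|g_{m}\|\,\|g_{n}\|\leqslant\|g_{m}\otimes g_{n}\|$ via axiom (ii) (the paper even notes the crossnorm inequalities are automatically equalities), and $\bigl\|\sum_{\sigma}P^{\sigma}g_{m+n}\bigr\|\leqslant\binom{m+n}{m}\|g_{m+n}\|$ via the triangle inequality and axiom (iii) --- or observe that your scalar identity for arbitrary continuous $\phi,\psi$ follows from the tensor identity by continuity of $\phi\otimes\psi$ (again axiom (ii)), with no density needed. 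With that substitution your proof is complete and coincides with the cited argument.
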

\begin{rem}
The result holds for arbitrary weakly geometric rough paths, or more
generally, for any group-like elements, since the proof relies only
on the shuffle product formula (c.f. (\ref{eq: shuffle product formula}) below) of signature  which is a purely algebraic property. But with the same factorial
normalization, the result is only interesting in the bounded variation
case.
\end{rem}
On the other hand, in Theorem \ref{thm: CLN theorem}, it is a priori not clear whether
the limit can be taken over the whole integer sequence, or equivalently,
whether a continuous path with finite length can have infinitely many
zero components in its sigature. In the present article, we provide
a definite answer to this question. 
\begin{thm}
\label{thm: nonvanishing}Let $w:[0,T]\rightarrow V$ be a continuous
path with finite length in some real Banach space $V$. Then the signature
of $w$ has infinitely many zero components if and only if $w$ is
tree-like. 
\end{thm}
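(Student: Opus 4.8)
One implication is immediate: if $w$ is tree-like then $S(w)=(1,0,0,\dots)$, so $g_n=0$ for every $n\ge 1$, hence for infinitely many $n$. For the converse, assume $g_n=0$ for infinitely many $n$; we must show $w$ is tree-like, equivalently (by Hambly--Lyons) that $g$ is trivial. Since both the hypothesis and the conclusion depend only on the signature, we may replace $w$ by the tree-reduced path with the same signature and reparametrise it by arc length, reducing to the case $w\colon[0,L]\to V$ with $\|w'\|=1$ a.e.\ and $w(0)=0$; the goal becomes $L=0$, and we argue by contradiction, assuming $L>0$ (so that $g\neq(1,0,\dots)$). The plan is to settle first the case $\dim V<\infty$ and then reduce the general Banach case to it.

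\noindent\textbf{The finite-dimensional case.} Here we use the shuffle-product formula, which makes $\langle\,\cdot\,,g\rangle$ a homomorphism for the shuffle product. Applied to decomposable functionals $\xi_1\otimes\cdots\otimes\xi_p$ and $\eta_1\otimes\cdots\otimes\eta_q$, and using that their shuffle is a finite sum of decomposable functionals of degree $p+q$, it shows that $N(g)$ is closed under addition. A non-empty additive sub-semigroup of $\mathbb Z_{\ge 1}$ with infinite complement must have $\gcd\ge 2$; since $g$ is nontrivial, $N(g)\neq\varnothing$, so there is an integer $e\ge 2$ with $g_n=0$ whenever $e\nmid n$. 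Now complexify: view $w$ as a path in $V\otimes\mathbb C$, put $\omega=e^{2\pi i/e}$, and let $\delta_\lambda(1,g_1,g_2,\dots)=(1,\lambda g_1,\lambda^2 g_2,\dots)$. The path $\omega w$ has signature $\delta_\omega g$, and $\delta_\omega g=g$ because $\omega^n g_n=g_n$ for every $n$ (either $e\mid n$ or $g_n=0$). Hence $w$ and $\omega w$ have the same signature, so they are tree-like equivalent by Hambly--Lyons. But multiplication by $\omega$ is a linear isometry of $V\otimes\mathbb C$, so it preserves arc-length parametrisation and tree-reducedness; therefore $\omega w$ is a tree-reduced unit-speed path on $[0,L]$, tree-like equivalent to the tree-reduced path $w$, so it coincides with $w$ up to reparametrisation --- and the only reparametrisation between two unit-speed paths on $[0,L]$ is the identity. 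Thus $\omega w=w$, so $(1-\omega)w\equiv 0$; since $\omega\neq 1$ this forces $w\equiv 0$, contradicting $L>0$.

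\noindent\textbf{Reduction of the general case.} It remains to show that if a finite-length path $w$ in an arbitrary Banach space has nontrivial signature, then some continuous linear map $\pi\colon V\to\mathbb R^{k}$ (for some $k$) makes $\pi w$ have nontrivial signature. Granting this, the finite-dimensional case applies to $\pi w$ and gives $\pi^{\otimes n}(g_n)\neq 0$ for all large $n$, whence $g_n\neq 0$ for all large $n$, so only finitely many components of $g$ vanish, contradicting the standing hypothesis. Equivalently, one must show that the iterated integrals of $w$ are faithfully detected by their decomposable pairings $\langle\xi_1\otimes\cdots\otimes\xi_n,g_n\rangle$, i.e.\ that if all of these vanish then $g\equiv 0$. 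This is \emph{not} automatic: when $V$ lacks the approximation property the decomposable (multilinear) functionals need not separate points of the completed tensor powers $V^{\otimes n}$, so a genuinely analytic argument is needed, and this is where the complexification enters together with holomorphic polynomial approximation. One complexifies, represents a separating functional on $V^{\otimes n}$ by a multilinear form, and approximates it uniformly on a compact set built from the (relatively compact) image of $w$ by holomorphic polynomials --- which is possible by Oka--Weil-type approximation theorems once the relevant compact set is shown to be polynomially convex --- and then feeds these approximations through the iterated integral to produce a nonzero decomposable pairing. The hard part will be precisely this last step: the shuffle/semigroup argument and the complexification-plus-uniqueness argument are comparatively soft, whereas obtaining the holomorphic polynomial approximations uniformly across all tensor degrees, with the required polynomial convexity, is where the genuine difficulty lies.
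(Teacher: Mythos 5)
Your algebraic step (shuffle product $\Rightarrow$ the set of nonzero degrees is an additive semigroup $\Rightarrow$ the signature vanishes outside the multiples of some $e\geqslant 2$) matches the paper, and your unit-speed rigidity argument plays the role of the paper's topological lemma. But there is a genuine gap at the decisive step: from ``$w$ and $\omega w$ have the same \emph{complex} signature'' you invoke Hambly--Lyons to conclude that they are tree-like equivalent. That theorem is a statement about real signatures. Viewing $V_{\mathbb C}$ as a real Banach space $V\oplus V$, the paths $w$ and $\omega w$ have \emph{different} real signatures (already at level one, $\omega w_t\neq w_t$ in $V\oplus V$); what coincides is only the complex signature, i.e.\ the image of the real signature under the quotient maps $(V_{\mathbb C})^{\otimes_{\mathbb R} n}\to (V_{\mathbb C})^{\otimes_{\mathbb C} n}$, which carries strictly less information. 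And the uniqueness theorem is simply false over $\mathbb C$: by Cauchy's theorem, any simple closed bounded-variation loop lying in a one-dimensional complex subspace of $\mathbb C^n$ has trivial complex signature without being tree-like. So ``same complex signature $\Rightarrow$ tree-like equivalent'' cannot be cited; establishing a usable substitute for the special pair $(w,\lambda w)$ is exactly the hard analytic content of the paper, carried out by separating the images of the (truncated, projected) signature paths of $w$ and $\lambda w$ with a continuous one-form and approximating it by \emph{holomorphic} polynomial one-forms, which requires Weinstock's polynomial-convexity theorem for $(A+i)\mathbb R^n\cup\mathbb R^n$ plus an Oka--Weil-type approximation theorem. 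The step you label ``comparatively soft'' is precisely where the theorem's difficulty lives; note also that when $e$ is even your argument can be repaired by taking $\omega=-1$, which is real, so the real uniqueness theorem does apply --- this is exactly the paper's easy (even $d$) case, while the odd case cannot be handled this way.

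Two smaller points. First, you misplace the role of complexification: the reduction from a general Banach space to finite dimensions does not need holomorphic approximation at all --- it is handled by real finite-dimensional projections and Hahn--Banach separation of the compact images of the truncated signature paths (as in Boedihardjo--Geng--Lyons--Yang), and in fact the paper never needs your intermediate claim that some finite-rank $\pi$ makes $\pi w$ have nontrivial signature of \emph{all} large degrees; it only needs projections preserving certain disjointness properties of path images. Second, your finite-dimensional argument as written uses the non-real scalar $\omega=e^{2\pi i/e}$ even when $e$ is even, so even there it should be rephrased with $\omega=-1$ to stay within the scope of the real uniqueness theorem. As it stands, the proposal proves the even case (after that adjustment) but leaves the odd case --- the core of the theorem --- unproved.
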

An immediate consequence of the above theorem is the following strengthened
version of Chang, Lyons and Ni's result.
\begin{cor}
Let $w:[0,T]\rightarrow V$ be a continuous path with finite length
in some real Banach space $V$ whose signature is $g=(1,g_{1},g_{2},\cdots)$
. Then we have 
\[
\lim_{n\rightarrow\infty}\|n!g_{n}\|_{V^{\otimes n}}^{\frac{1}{n}}=\sup_{n\geqslant1}\|n!g_{n}\|_{V^{\otimes n}}^{\frac{1}{n}}.
\]
\end{cor}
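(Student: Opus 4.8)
The plan is to obtain the corollary directly from Theorem~\ref{thm: nonvanishing} and Theorem~\ref{thm: CLN theorem}, distinguishing whether or not $w$ is tree-like. Set $a_n\triangleq\|n!g_n\|_{V^{\otimes n}}^{1/n}$ for $n\geqslant1$ (with the convention $a_n=0$ when $g_n=0$). Writing $\ell$ for the length of $w$, the standard factorial estimate $\|g_n\|_{V^{\otimes n}}\leqslant\ell^n/n!$ gives $0\leqslant a_n\leqslant\ell$, so $\sup_{n\geqslant1}a_n$ is finite and it suffices to prove that $\lim_{n\to\infty}a_n$ exists and equals this supremum.

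If $w$ is tree-like, then by the uniqueness theorem of Hambly and Lyons \cite{HL10} its signature is trivial, i.e.\ $g_n=0$ for all $n\geqslant1$; hence $a_n\equiv0$ and both sides of the claimed identity are equal to $0$.

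Suppose now that $w$ is not tree-like. Then its signature is nontrivial, so $g_m\neq0$ for some $m$ and in particular $\sup_{n\geqslant1}a_n>0$. Moreover, by Theorem~\ref{thm: nonvanishing} the signature of $w$ has only finitely many zero components, so $N(g)$ is cofinite in the positive integers; choose $n_0$ with $n\in N(g)$ whenever $n\geqslant n_0$. Then the subsequence $(a_n)_{n\in N(g)}$ and the full sequence $(a_n)_{n\geqslant1}$ coincide for $n\geqslant n_0$, so their limits agree once the former exists, and the finitely many indices outside $N(g)$, at which $a_n=0$, do not raise the supremum. Invoking Theorem~\ref{thm: CLN theorem} we obtain
\[
\lim_{n\to\infty}a_n\;=\;\lim_{\substack{n\to\infty\\ n\in N(g)}}a_n\;=\;\sup_{n\geqslant1}a_n,
\]
which is exactly the assertion.

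In this argument there is essentially no obstacle to overcome: all the substance is carried by Theorem~\ref{thm: nonvanishing}, and the remaining work amounts only to the elementary observation that discarding finitely many vanishing components of the signature alters neither the limit nor the supremum appearing in Theorem~\ref{thm: CLN theorem}, together with the (equally elementary) remark that a non-tree-like path has a nontrivial signature so that the supremum is strictly positive.
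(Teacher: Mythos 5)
Your proof is correct and follows exactly the route the paper intends: the corollary is stated there as an immediate consequence of Theorem \ref{thm: nonvanishing} combined with Theorem \ref{thm: CLN theorem}, with the tree-like case giving trivial signature and the non-tree-like case giving a cofinite $N(g)$ so that the restricted limit coincides with the full limit. The only superfluous part is the remark that the supremum is strictly positive, which is not needed since Theorem \ref{thm: CLN theorem} already identifies the restricted limit with the supremum over all $n\geqslant1$.
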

We point out that it is possible and easy to construct non-tree-like
\textit{rough} paths having vanishing signature along a subsequence
of degrees, and this makes our result non-trivial and surprising. For instance, the signature of the $2$-rough path ${\bf w}_{t}=\exp(t[{\rm e}_{1},{\rm e}_{2}])$
over $V=\mathbb{R}^{2}$ vanishes identically along odd degrees. More
generally, if $l_{t}$ is a continuous bounded variation path in the
space of degree $n$ homogeneous Lie polynomials, then the signature
of the $n$-rough path ${\bf w}_{t}=\exp(l_{t})$ vanishes identically
along degrees which are not multiples of $n$. Therefore, Theorem \ref{thm: nonvanishing} has to be a non-rough-path property, and the core of the argument, unlike the proof of Theorem \ref{thm: CLN theorem}, has to be analytic.

\section{Proof of the Main Theorem}

The sufficiency part of Theorem \ref{thm: nonvanishing} follows directly from the uniqueness result of Hambly and Lyons. For
the necessity part, our proof consists of two main ingredients.
The first one, which is a purely algebraic property, is to identify more zeros in the signature
from the given ones. The second one, which is the core of proof and relies crucially on the
bounded variation assumption, is to show that the path cannot have
``too many'' zeros in its signature unless it is tree-like. The algebraic
ingredient is relatively elementary while the analytic ingredient
relies on a complexification argument and deep results from several
complex variables.

\subsection{The algebraic ingredient}

To fix notation, for a given positive integer $d,$ denote $(d)$
as the set of positive integer multiples of $d$. The set of positive
integers is denoted as $\mathbb{Z}_{+}.$ 
\begin{lem}
\label{lem: additive subset contained in an ideal}Let $A$ be a non-empty
subset of $\mathbb{Z}_{+}$ which is closed under addition. If $\mathbb{Z}_{+}\backslash A$
contains infinitely many elements, then there exists a positive integer
$d\geqslant2,$ such that $A\subseteq(d)$.
\end{lem}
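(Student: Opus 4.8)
The plan is to set $d=\gcd(A)$, which is well-defined since $A$ is a nonempty subset of $\mathbb{Z}_{+}$, and to show that $d\geqslant2$. Once this is established, the conclusion $A\subseteq(d)$ is immediate, because every element of $A$ is divisible by the greatest common divisor of $A$. Thus the entire content of the lemma is the following equivalent assertion: if $\gcd(A)=1$, then $\mathbb{Z}_{+}\backslash A$ is finite. I would prove this and then argue by contraposition.

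First I would pass to a finite sub-collection. Since the gcd of the (possibly infinite) set $A$ is already attained on some finite subset, there exist $a_{1},\dots,a_{k}\in A$ with $\gcd(a_{1},\dots,a_{k})=1$. By B\'ezout's identity there are integers $n_{1},\dots,n_{k}\in\mathbb{Z}$ with $\sum_{i}n_{i}a_{i}=1$. Separating positive and negative coefficients, write $1=P-Q$ where $P=\sum_{n_{i}>0}n_{i}a_{i}$ and $Q=\sum_{n_{i}<0}(-n_{i})a_{i}$. If the index set $\{i:n_{i}<0\}$ is empty, then $\sum_{i}n_{i}a_{i}=1$ with all $n_{i}\geqslant0$ forces some $a_{i}=1$, hence $1\in A$ and, by closure under addition, $A=\mathbb{Z}_{+}$, so $\mathbb{Z}_{+}\backslash A=\varnothing$; the symmetric degenerate case is handled the same way. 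Otherwise $P$ and $Q$ are each nonempty sums of elements of $A$, so $P,Q\in A$ by closure under addition, with $P=Q+1$; and if $Q=1$ then again $1\in A$ and we are done, so we may assume $Q\geqslant2$.

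Next I would show that every sufficiently large integer lies in $A$. Given $m\in\mathbb{Z}_{+}$, write $m=sQ+a$ with $a\in\{0,1,\dots,Q-1\}$ and $s=(m-a)/Q\in\mathbb{Z}_{\geqslant0}$. Using $P=Q+1$ we compute $aP+(s-a)Q=a(Q+1)+(s-a)Q=sQ+a=m$. Hence, provided $s\geqslant a$ and $s\geqslant1$, the number $m$ is a nonnegative integer combination of the elements $P,Q\in A$ with at least one summand, so $m\in A$. The inequality $s\geqslant a$ holds as soon as $m\geqslant a(Q+1)$, which is guaranteed whenever $m\geqslant(Q-1)(Q+1)=Q^{2}-1$; and for such $m$ one has $m-a\geqslant Q^{2}-Q\geqslant Q$, so $s\geqslant1$ as well. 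Therefore $\mathbb{Z}_{+}\backslash A\subseteq\{1,\dots,Q^{2}-2\}$ is finite, contradicting the hypothesis that $\mathbb{Z}_{+}\backslash A$ is infinite. Consequently $d=\gcd(A)\geqslant2$ and $A\subseteq(d)$.

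The only genuinely delicate points, and where I would take the most care, are the edge cases in the B\'ezout step: since $A$ is assumed only to be closed under addition (a sub-semigroup, not necessarily containing $0$ or an identity), I must verify that the partial sums $P$ and $Q$ each involve at least one summand before invoking closure, and I must separately dispose of the degenerate configurations that force $1\in A$. Everything after that is the classical computation showing that a numerical sub-semigroup generated by coprime integers is cofinite.
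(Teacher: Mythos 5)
Your proof is correct, but it follows a genuinely different route from the paper's appendix argument. You set $d=\gcd(A)$ (so $A\subseteq(d)$ is automatic) and prove the contrapositive via B\'ezout: writing $1=P-Q$ with $P,Q\in A$, $P=Q+1$, and then showing by the classical Frobenius-type computation that every $m\geqslant Q^{2}-1$ lies in $A$, so a semigroup with $\gcd(A)=1$ has finite complement. This is in effect a direct proof of the numerical-semigroup characterization that the paper cites from Rosales and Garc\'{\i}a-S\'anchez; your careful treatment of the edge cases (at least one summand in each of $P$ and $Q$, the configurations forcing $1\in A$) is exactly where care is needed, and the only cosmetic slip is that the ``symmetric degenerate case'' (all B\'ezout coefficients nonpositive) is in fact vacuous since such a sum cannot equal $1$ --- harmless either way. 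The paper instead fixes a single $n\in A$, works in $\mathbb{Z}_{n}$, shows that the set $G$ of residues met by $A$ is a \emph{proper} subgroup (properness coming from the observation that within a residue class the complement of $A$ is downward closed under subtracting $n$, so an infinite complement forces a whole class to be missed), and takes $d$ to be a generator of $G$, which divides $n$ by Lagrange. Your approach buys a canonical choice $d=\gcd(A)$ and an explicit quantitative bound $\mathbb{Z}_{+}\backslash A\subseteq\{1,\dots,Q^{2}-2\}$; the paper's avoids B\'ezout and the gcd-stabilization step, using only elementary facts about subgroups of a cyclic group, and is somewhat shorter. Both are complete and elementary.
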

\begin{proof}
This is a direct consequence of the characterization of numerical
semigroups (c.f. Rosales and Garc$\acute{\text{i}}$a-S$\acute{\text{a}}$nchez
\cite{RG09}, Lemma 2.1). Since it is elementary, for completeness
we provide an independent proof in the appendix.
\end{proof}
Now let $g=(1,g_{1},g_{2},\cdots)$ be a tensor series, i.e. $g_{n}\in V^{\otimes n}$
for each $n$. Recall that $g$ is \textit{group-like }if it satisfies the following so-called \textit{shuffle product formula}: \textit{
\begin{equation}
g_{m}\otimes g_{n}=\sum_{\sigma\in{\cal S}(m,n)}P^{\sigma}(g_{m+n})\ \ \ \forall m,n\geqslant1,\label{eq: shuffle product formula}
\end{equation}
}where\textit{ }${\cal S}(m,n)$ is the subset of $(m,n)$-shuffles
in the permutation group of order $m+n$. It is standard that the signature
of a weakly geometric rough path (in particular, of a bounded variation
path) is group-like. By applying Lemma \ref{lem: additive subset contained in an ideal}
to the context of group-like elements, we obtain the following result
which is the algebraic ingredient for the proof of Theorem \ref{thm: nonvanishing}. 
\begin{lem}
\label{lem: vanishing lemma} Let $g$ be a group-like element. If
$g$ vanishes along a subsequence of degrees, then there exists a
positive integer $d\geqslant2$ such that $g$ vanishes identically
along degrees outside $(d)$.
\end{lem}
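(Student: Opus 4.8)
The plan is to deduce Lemma \ref{lem: vanishing lemma} from Lemma \ref{lem: additive subset contained in an ideal} by showing that the set
\[
A \triangleq \{ n \in \mathbb{Z}_+ : g_n \neq 0 \}
\]
of degrees at which a group-like element $g$ does \emph{not} vanish is closed under addition. Once this is established, the hypothesis that $g$ vanishes along a subsequence of degrees says precisely that $\mathbb{Z}_+ \setminus A$ is infinite, so Lemma \ref{lem: additive subset contained in an ideal} provides a $d \geqslant 2$ with $A \subseteq (d)$; that is, $g_n = 0$ for every $n \notin (d)$, which is the assertion. One degenerate case needs a word: if $A = \emptyset$ the conclusion is vacuous (any $d \geqslant 2$ works), so we may assume $A \neq \emptyset$, as required for Lemma \ref{lem: additive subset contained in an ideal}.

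\textbf{The closure of $A$ under addition} is where the shuffle product formula does its work. Suppose $m, n \in A$, so $g_m \neq 0$ and $g_n \neq 0$. I want to conclude $g_{m+n} \neq 0$. The shuffle formula (\ref{eq: shuffle product formula}) reads
\[
g_m \otimes g_n = \sum_{\sigma \in {\cal S}(m,n)} P^\sigma(g_{m+n}).
\]
The left-hand side is nonzero: since $\|\cdot\|$ is a (reasonable) tensor norm, $\|g_m \otimes g_n\| = \|g_m\| \cdot \|g_n\| > 0$ by property (i) (which, as noted in the excerpt, is an equality), so $g_m \otimes g_n \neq 0$. Hence the right-hand side is nonzero, which forces $g_{m+n} \neq 0$ — for if $g_{m+n} = 0$, then each $P^\sigma(g_{m+n}) = 0$ and the sum vanishes. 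This shows $m + n \in A$, completing the proof.

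\textbf{The only point demanding a little care} — and what I'd flag as the main (mild) obstacle — is making rigorous the step ``$\|g_m \otimes g_n\| = \|g_m\|\|g_n\|$''. The tensor $g_m \otimes g_n$ lives in $V^{\otimes(m+n)}$, and one must know that the norm on this completed tensor product restricts to something submultiplicative (indeed multiplicative, by the remark following condition (ii) in the excerpt) on the algebraic tensors and that this persists after completion. This is exactly property (i) of a reasonable tensor algebra norm, valid for elements of the algebraic tensor product $V^{\otimes_a m}$ and $V^{\otimes_a n}$; since $g_m, g_n$ are genuine tensors (iterated integrals of a finite-length path, hence limits approximable within each fixed-degree space), the identity applies and gives $g_m \otimes g_n \neq 0$ whenever both factors are nonzero. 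No analysis beyond this is needed: the entire lemma is a two-line consequence of the shuffle identity together with the number-theoretic Lemma \ref{lem: additive subset contained in an ideal}.
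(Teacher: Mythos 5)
Your proof is correct and takes essentially the same route as the paper: show that the set of degrees with $g_n\neq 0$ is closed under addition via the shuffle product formula together with the (automatic) multiplicativity of reasonable tensor norms, then invoke Lemma \ref{lem: additive subset contained in an ideal}. The paper merely phrases the key step quantitatively, as the supermultiplicative inequality $i!\,j!\,\|g_{i}\|_{V^{\otimes i}}\|g_{j}\|_{V^{\otimes j}}\leqslant (i+j)!\,\|g_{i+j}\|_{V^{\otimes(i+j)}}$, which is the same argument as your qualitative one.
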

\begin{proof}
Let $N(g)\subseteq\mathbb{Z}_{+}$ be the set of degrees along which
$g$ vanishes. The result is trivial if $N(g)=\mathbb{Z}_{+}.$ Otherwise,
suppose that $A\triangleq\mathbb{Z}_{+}\backslash N(g)$ is non-empty.
Let $i,j\in N(g).$ Since $g_{i}$ and $g_{j}$ are both non-zero,
according to the shuffle product formula (\ref{eq: shuffle product formula}) and the reasonability of
tensor norms, we have 
\[
i!j!\|g_{i}\|_{V^{\otimes i}}\cdot\|g_{j}\|_{V^{\otimes j}}\leqslant(i+j)!\|g_{i+j}\|_{V^{\otimes (i+j)}},
\]
and thus $g_{i+j}\neq0.$ Therefore, $A$ is closed under addition.
Since $N(g)$ is an infinite set by assumption, we conclude from Lemma
\ref{lem: additive subset contained in an ideal} that $A\subseteq(d)$
for some $d\geqslant2.$ In other words, $g$ vanishes identically
along degrees outside $(d).$
\end{proof}

\subsection{The analytic ingredient}

Note that Lemma \ref{lem: vanishing lemma} relies only on the group-like
property of signatures. To complete the proof of Theorem \ref{thm: nonvanishing},
it remains to show that the signature of a bounded variation path cannot vanish identically outside $(d)$ for some $d\geqslant2$
unless it is tree-like. 

Let us first describe the underlying intuition. Suppose
that $w$ is a bounded variation path whose signature vanishes identically
outside $(d)$. If we complexify our underlying space and take $\lambda$
to be a $d$-th root of unity, then the two paths $w$ and $\lambda\cdot w$
have the same complex signature. Since these two paths are still quite
different even modulo tree-like pieces, it is reasonable to expect
that this could not happen unless $w$ itself is tree-like. However,
as we will see, this is not a simple consequence of the uniqueness
result in \cite{HL10}, and indeed there is a very subtle issue
in the complex situation which constitutes the main challenge
for this part. 

\subsubsection{The real case}

To illustrate the idea better, we first consider the case in which
no complexification is needed, i.e. when $d$ is an even integer.
In this case, the assumption implies that odd degree components of
the signature of $w$ are identically zero. Theorem \ref{thm: nonvanishing} then
follows from a simple topological lemma below and the general uniqueness
result of Boedihardjo, Geng, Lyons and Yang \cite{BGLY16} over $\mathbb{R}$.
\begin{lem}
\label{lem: simple topological fact}Let $f:V\rightarrow V$ be a
continuous bijection over a real Banach space $V$ whose only fixed point is the origin and it preserves spheres centered at the origin. Let
$w$ be a continuous path in $V$ starting at the origin. If $w$
and $f(w)$ are equal up to a reparametrization, then $w$ must be
the trivial path.
\end{lem}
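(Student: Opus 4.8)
The plan is to argue by contradiction: assuming $w$ is not the trivial path, I will produce a nonzero point of $V$ that is fixed by $f$, contradicting the hypothesis. Since $f$ preserves spheres centered at the origin, we have $\|f(x)\|=\|x\|$ for every $x\in V$. I will write the reparametrization hypothesis in the standard symmetric form: there exist continuous non-decreasing surjections $a,b\colon[0,1]\to[0,T]$ with $w(a(u))=f(w(b(u)))$ for all $u$. Taking norms and using $\|f(\cdot)\|=\|\cdot\|$ gives the crucial identity $\|w(a(u))\|=\|w(b(u))\|$ for every $u\in[0,1]$. Set $M=\max_{t\in[0,T]}\|w(t)\|$; if $w$ is not trivial then $M>0$, and since $a$ is onto $[0,T]$ the common value $\|w(a(u))\|=\|w(b(u))\|$ also attains maximum $M$ over $u\in[0,1]$.

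The heart of the argument is to locate a single parameter $u^{*}$ at which $a$ and $b$ are forced to agree and at which $w$ sits on the sphere of radius $M$. Let $C=\{t\in[0,T]:\|w(t)\|=M\}$, a nonempty closed set, and put $t^{-}=\min C$. Let $\Lambda=\{u\in[0,1]:\|w(a(u))\|=M\}$; by the identity above this equals both $a^{-1}(C)$ and $b^{-1}(C)$, so $\Lambda$ is nonempty and closed, and we may set $u^{*}=\min\Lambda$. I claim $a(u^{*})=t^{-}$: indeed $a(u^{*})\in C$ forces $a(u^{*})\geqslant t^{-}$, and if the inequality were strict, surjectivity of $a$ would give $u'$ with $a(u')=t^{-}$, whence monotonicity of $a$ forces $u'<u^{*}$, while $u'\in a^{-1}(C)=\Lambda$ contradicts the minimality of $u^{*}$. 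The identical reasoning applied to $b$ — also continuous, non-decreasing, onto $[0,T]$, with $b^{-1}(C)=\Lambda$ — gives $b(u^{*})=t^{-}$. Therefore $w(t^{-})=w(a(u^{*}))=f(w(b(u^{*})))=f(w(t^{-}))$, so $w(t^{-})$ is a fixed point of $f$; but $\|w(t^{-})\|=M>0$, contradicting that the origin is the only fixed point of $f$. Hence $M=0$ and $w$ is the trivial path.

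I do not expect a serious obstacle once the argument is set up as above; the only point that genuinely requires care is that the conclusion must use the full reparametrization data and not merely the equality of images $f(\mathrm{Im}\,w)=\mathrm{Im}\,w$. For example, a space-filling curve of the closed unit disc in $\mathbb{R}^{2}$ and its image under an irrational rotation have the same image but are not reparametrizations of one another, and the statement would be false if ``equal up to a reparametrization'' were weakened to ``equal images''. Accordingly the real work is in choosing $u^{*}$ — here, the first parameter at which the common radial profile attains its maximum — so that the two monotone reparametrizations are pinned to the same value there, which is exactly what the minimality argument delivers. (One may also note as a sanity check that the hypothesis $w(0)=0$ is automatically forced: running the same reasoning at $u=0$, where any such $a,b$ must vanish, already shows $w(0)$ is fixed by $f$, hence equals the origin.)
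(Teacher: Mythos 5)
Your proof is correct, and it reaches the conclusion by a somewhat different mechanism than the paper. The paper also exploits norm preservation and a first hitting time of a sphere, but its contradiction is topological: it fixes $\varepsilon=\|w_t\|$ for some $t$ with $w_t\neq 0$, takes $\tau$ the first time $\|w\|$ reaches $\varepsilon$, separates $f(w)([\tau-\delta,\tau])$ from $w([0,\tau])$ by continuity (using that $w_\tau\neq f(w)_\tau$, since the only fixed point is $0$), and then argues that the reparametrization would force $f(w)|_{[0,\tau-\delta)}$, which stays in the open ball $B_\varepsilon$, to have already traced out $w([0,\tau])$ including the boundary point $w_\tau$. You instead make the reparametrization hypothesis explicit in the symmetric Fr\'echet form $w\circ a=f(w)\circ b$ with $a,b$ continuous non-decreasing surjections, observe that norm preservation gives the common radial profile $\|w(a(u))\|=\|w(b(u))\|$, and pin both $a$ and $b$ to the same first hitting time $t^-$ of the maximal sphere, thereby exhibiting $w(t^-)$ as a nonzero fixed point of $f$ --- a direct contradiction with the hypothesis on $f$. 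What your route buys is precision: the paper's step ``a subset of $f(w)|_{[0,\tau-\delta)}$ must coincide with $w|_{[0,\tau]}$'' is left informal, whereas your minimality argument uses only monotonicity and surjectivity of $a,b$ and needs no separation constant $\delta$; as you note, your level set could equally be any sphere of positive radius met by $w$, and the hypothesis $w_0=0$ is not even needed. The paper's version, on the other hand, is stated closer to the way the reparametrization equivalence is actually used later (via tree-reduced representatives) and avoids introducing the two-parametrization formalism. Either way, your argument is a valid and complete proof of the lemma.
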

\begin{proof}
Suppose on the contrary that $w$ is non-trivial. Then there exists
some $t>0$ such that $w_{t}\neq0.$ Let $\varepsilon\triangleq\|w_{t}\|_{V}$
and define 
\[
\tau\triangleq\inf\{0\leqslant s\leqslant t:\|w_{s}\|_{V}=\varepsilon\}.
\]
Note that $w|_{[0,\tau)}$ is contained in the open ball $B_{\varepsilon}.$
Since $\|f(w)_{\tau}\|_{V}=\|w_{\tau}\|_{V}=\varepsilon$ and $f(w)_{\tau}\neq w_{\tau},$
by continuity, there exists some $\delta>0,$ such that 
\[
f(w)([\tau-\delta,\tau])\cap w([0,\tau])=\emptyset.
\]
Since $f(w)$ and $w$ differ by reparametrization, we know that a
subset of $f(w)|_{[0,\tau-\delta)}$ must coincide with $w|_{[0,\tau]}$.
This is not possible since by the assumption on $f$, we know that
$f(w)|_{[0,\tau-\delta)}$ is contained in the open ball $B_{\varepsilon}$
while $w_{\tau}$ lies on the boundary. Therefore, $w$ must be trivial. 
\end{proof}

Now we can give the proof of Theorem \ref{thm: nonvanishing} when $d$ is even. Given a path $w$, its \textit{signature path} is the path defined by \[
\mathbb{W}_{t}\triangleq\left(1,w_{t},\int_{0<s_{1}<s_{2}<t}dw_{s_{1}}\otimes dw_{s_{2}},\cdots,\int_{0<s_{1}<\cdots<s_{n}<t}dw_{s_{1}}\otimes\cdots\otimes dw_{s_{n}},\cdots\right),
\] which lives in the infinite tensor algebra $T((V))\triangleq\Pi_{n=0}^\infty V^{\otimes n}$. For each $N\geqslant1$, the \textit{truncated signature path} of order $N$ is the projection of $\mathbb{W}_t$ onto the truncated tensor algebra $T^{(N)}(V)\triangleq\oplus_{n=0}^N V^{\otimes n}$ of order $N$.

\begin{proof}[Proof of Theorem \ref{thm: nonvanishing} when $d$ is even]

Suppose that $w:[0,T]\rightarrow V$ is a continuous path starting
at the origin with finite length, whose signature $g$ vanishes identically
along odd degree components. Let $\bar{w}$ be the unique tree-reduced
path (up to reparametrization) having the same signature as $w$,
i.e. the one which does not contain any tree-like pieces or equivalently
whose signature path is simple (c.f. \cite{BGLY16}, Theorem 1.1 and
Lemma 4.6). From assumption, the two paths $-\bar{w}$ and $\bar{w}$
have the same signature. According to the uniqueness theorem for signature in \cite{BGLY16},
they are equal up to tree-like equivalence. But $-\bar{w}$ is also
tree-reduced since its signature path
is also simple. Therefore, $-\bar{w}$ and $\bar{w}$ must be equal
up to reparametrization. According to Lemma \ref{lem: simple topological fact}
applied to the map $f$ defined by $f(v)=-v$$,$ we conclude that
$\bar{w}$ must be trivial and equivalently $w$ is tree-like. 

\end{proof}
\begin{rem}
In the above argument, we have not used the bounded variation property in an essential way, and the theorem holds for paths with finite $p$-variation
for $1\leqslant p<2$ without changing the proof. The non-rough-path
regularity is used in the way that if the first level path is trivial
then the signature (or equivalently, the signature path) is trivial, which
is not true for general rough paths.
\end{rem}

\subsubsection{The complex case}

Now we consider the case when $d$ is an odd integer. Unlike the other
case, it is hard to construct a real isomorphism of $V$ leaving the
signature invariant, and the simplest way to have such invariance
is multiplying by a $d$-th root of unity which is now a complex number.
In this way, we need to complexify the underlying space and signature
needs to be understood in the complex sense. A crucial point one needs
to be aware of is that the complex signature is defined through iterated
integrals with respect to the complex variables only but \textit{not} with
their conjugates.

This will lead to a substantial challenge in the complex case to make
the previous real argument work. Indeed, the real uniqueness result for
signature does not hold over $\mathbb{C}$! Heuristically, being tree-like
is a real property and there exists non-tree-like paths with trivial
complex signature. For instance, according to Cauchy's
theorem, any simple and closed path with finite length living inside a one dimensional complex subspace
of $\mathbb{C}^{n}$ has trivial complex signature while it needs
not be tree-like. Therefore, the complex version of the uniqueness
result requires major modification, which at this point is unclear
and unknown. However, for our particular problem, we can still obtain
inspirations from the main strategy in the proof of the real uniqueness
result. Our argument in this part relies on ideas developed in \cite{BGLY16}
and deep results from several complex variables.

To start with, we first introduce some standard notions about complexification
of real Banach spaces. Recall that the complexification of $V$ is
defined by $V_{\mathbb{C}}\triangleq V\otimes_{\mathbb{R}}\mathbb{C},$
which is isomorphic to $V\oplus V$ equipped with a complex scalar
multiplication in the canonical way. A natural choice of norms on
$V_{\mathbb{C}},$ known as the \textit{Taylor complexification
norm}, is defined by 
\[
\|x+iy\|_{T}\triangleq\sup_{0\leqslant t\leqslant2\pi}\|x\cos t-y\sin t\|_{V},\ \ \ x,y\in V.
\]
The Taylor complexification norm satisfies 
\[
\|x+iy\|_{T}=\|x-iy\|_{T},\ \|x\|_{T}=\|x\|_{V},\ \ \ x,y\in V.
\]
We always endow $V_{\mathbb{C}}$ with this norm and the (complex)
tensor products $V_{\mathbb{C}}^{\otimes n}$ with the injective tensor
norm. Let $j_{n}:V^{\otimes_{a}n}\rightarrow V_{\mathbb{C}}^{\otimes_{a}n}\cong\left(V^{\otimes_{a}n}\right)_{\mathbb{C}}$
be the canonical embedding.
\begin{lem}
\label{lem: cty of j_n}For each $n\geqslant1,$ $j_{n}$ is continuous
with norm at most one, and thus extends continuously to the completion
of the algebraic tensor product.
\end{lem}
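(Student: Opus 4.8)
The plan is to bound the injective norm of $j_{n}\xi$ directly for $\xi$ in the algebraic tensor product, using an explicit description of the unit ball of $V_{\mathbb{C}}^{*}$. For $\xi=\sum_{k}v_{1}^{k}\otimes\cdots\otimes v_{n}^{k}\in V^{\otimes_{a}n}$ with $v_{i}^{k}\in V$ one has
\[
\|j_{n}\xi\|_{V_{\mathbb{C}}^{\otimes n}}=\sup\Bigl\{\Bigl|\sum_{k}\prod_{i=1}^{n}\phi_{i}(v_{i}^{k})\Bigr|:\ \phi_{i}\in V_{\mathbb{C}}^{*},\ |\phi_{i}(z)|\leqslant\|z\|_{T}\ \forall z\Bigr\},
\]
since the injective-norm formula only sees the restrictions $\phi_{i}|_{V}$. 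It therefore suffices to bound $\bigl|\sum_{k}\prod_{i}\phi_{i}(v_{i}^{k})\bigr|$ by $\|\xi\|_{V^{\otimes_{a}n}}$ and then take the supremum over admissible $\phi_{i}$. As only finitely many vectors appear, we restrict each $\phi_{i}$ to the finite-dimensional subspace $W=\operatorname{span}\{v_{i}^{k}\}\subseteq V$; the restriction to $W_{\mathbb{C}}$ of the Taylor norm of $V_{\mathbb{C}}$ is the Taylor norm of $W$, so $\phi_{i}|_{W}$ still lies in the corresponding unit ball.

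The crux is a description of that unit ball. Writing a complex-linear $\psi$ on $W_{\mathbb{C}}$ as $\psi|_{W}=u+iw$ with $u,w\in W^{*}$, the $e^{i\theta}$-invariance of $\|\cdot\|_{T}$ lets one replace $|\psi(x+iy)|$ by $\operatorname{Re}\psi(x+iy)=u(x)-w(y)$, so that $\|\psi\|_{T}^{*}\leqslant1$ if and only if $u(x)-w(y)\leqslant\|x+iy\|_{T}=\sup_{\theta}\|x\cos\theta-y\sin\theta\|_{W}$ for all $x,y\in W$. The right-hand side is the support function, over $\theta\in[0,2\pi]$ and $\alpha$ in the unit ball of $W^{*}$, of the linear forms $(x,y)\mapsto\cos\theta\,\alpha(x)-\sin\theta\,\alpha(y)$; by the bipolar theorem — in finite dimensions, where convex hulls of compact sets are compact — this says exactly that $\psi|_{W}$ is a finite convex combination $\sum_{j}\lambda_{j}\,e^{i\theta_{j}}\alpha_{j}$ of ``rotated real functionals'', with each $\alpha_{j}$ in the unit ball of $W^{*}$.

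Granting this, expand each $\phi_{i}|_{W}=\sum_{j}\lambda_{ij}\,e^{i\theta_{ij}}\alpha_{ij}$:
\[
\sum_{k}\prod_{i=1}^{n}\phi_{i}(v_{i}^{k})=\sum_{j_{1},\dots,j_{n}}\Bigl(\prod_{i=1}^{n}\lambda_{ij_{i}}e^{i\theta_{ij_{i}}}\Bigr)(\alpha_{1j_{1}}\otimes\cdots\otimes\alpha_{nj_{n}})(\xi).
\]
Each $(\alpha_{1j_{1}}\otimes\cdots\otimes\alpha_{nj_{n}})(\xi)$ is real; extending the $\alpha_{ij_{i}}$ to norm-$\leqslant1$ functionals on $V$ by Hahn--Banach and invoking property (ii) of reasonable tensor norms, its absolute value is at most $\|\xi\|_{V^{\otimes_{a}n}}$. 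Since $|e^{i\theta_{ij_{i}}}|=1$ and $\sum_{j_{1},\dots,j_{n}}\prod_{i}\lambda_{ij_{i}}=\prod_{i}\bigl(\sum_{j}\lambda_{ij}\bigr)=1$, the triangle inequality gives $\bigl|\sum_{k}\prod_{i}\phi_{i}(v_{i}^{k})\bigr|\leqslant\|\xi\|_{V^{\otimes_{a}n}}$. Taking the supremum over the $\phi_{i}$ yields $\|j_{n}\xi\|_{V_{\mathbb{C}}^{\otimes n}}\leqslant\|\xi\|_{V^{\otimes_{a}n}}$, so $j_{n}$ is bounded with norm at most one, and a bounded linear map extends to the completion.

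The main obstacle is the description of the Taylor dual ball in the second step. The tempting shortcut — that $\phi|_{V}$ is itself a single rotation $e^{i\theta}\alpha$ of a real functional of the same norm — is false: the Taylor dual norm of $\phi$ can exceed the operator norm of $\phi|_{V}$ by a factor of $2$ (already for $V=\mathbb{R}^{2}$). What rescues the argument is that $\phi|_{V}$ is an \emph{average} of such rotations, so that the unimodular factors $e^{i\theta_{ij_{i}}}$ are absorbed by the convex weights $\lambda_{ij_{i}}$ rather than costing the factor $2^{n}$ that a crude estimate would incur; everything else is routine.
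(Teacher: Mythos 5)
Your proof is correct, but it takes a genuinely different route from the paper. The paper disposes of the lemma in two lines by citation: van Zyl's theorem that the injective tensor norm on $V_{\mathbb{C}}^{\otimes_a n}$ coincides with the Taylor complexification of the injective norm on $V^{\otimes_a n}$, combined with the Diestel--Uhl fact that the injective norm is the smallest reasonable tensor norm, immediately gives $\|j_n(\xi)\|_{V_{\mathbb{C}}^{\otimes_a n}}=\|\xi\|_{\mathrm{inj}}\leqslant\|\xi\|_{V^{\otimes_a n}}$. You instead prove the one inequality that is needed by hand: you reduce to the finite-dimensional subspace spanned by the entries of $\xi$ (where the Taylor norm of $V_{\mathbb{C}}$ restricts to the Taylor norm of $W_{\mathbb{C}}$), identify the Taylor-dual unit ball of $W_{\mathbb{C}}^{*}$ as the convex hull of the rotated real functionals $e^{i\theta}\alpha$ with $\alpha$ in the unit ball of $W^{*}$ (a correct support-function/bipolar argument, with compactness of the convex hull in finite dimensions justifying the passage to finite convex combinations), and then expand and invoke axiom (ii) of reasonable norms, after a Hahn--Banach extension of the $\alpha$'s to $V$, to bound each elementary real tensor functional by $\|\xi\|_{V^{\otimes_a n}}$. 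In effect you reprove the relevant half of van Zyl's theorem and rederive the minimality of the injective norm among reasonable norms from property (ii). The paper's route is shorter and gives the exact equality of norms; yours is longer but self-contained and elementary, and your closing remark correctly isolates the genuine pitfall --- the Taylor-dual ball is the convex hull of rotated real functionals, not the set of single rotations (your factor-$2$ example on $\mathbb{R}^{2}$ is accurate), and it is precisely the convex weights that prevent the naive $2^{n}$ loss.
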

\begin{proof}
According to van Zyl \cite{Zyl08}, Theorem 2.3, the injective tensor
norm on $V_{\mathbb{C}}^{\otimes_{a}n}$ coincides with the Taylor
complexification norm induced from the injective norm on $V^{\otimes_{a}n}$.
In addition, it is known (c.f. \cite{DU77}, Chapter 8, Proposition
3) that injective tensor norm is the smallest among all reasonable
tensor norms. Therefore, for any $\xi\in V^{\otimes_{a}n},$ 
\[
\|j_{n}(\xi)\|_{V_{\mathbb{C}}^{\otimes_{a}n}}=\|j_{n}(\xi)\|_{T}=\|\xi\|_{{\rm inj}}\leqslant\|\xi\|_{V^{\otimes_{a}n}}.
\]
\end{proof}
\begin{lem}
\label{lem: consistency of signature}Let $w:[0,T]\rightarrow V$
be a continuous path with finite length. For each $n\geqslant1,$
let $g_{n}$ (respectively, $g_{n}^{\mathbb{C}}$) be the $n$-th
degree component of its signature when $w$ is viewed as a path in
$V$ (respectively, in $V_{\mathbb{C}}$). Then $g_{n}^{\mathbb{C}}=j_{n}(g_{n}).$
\end{lem}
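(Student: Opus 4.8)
The plan is to observe that the identity $g_n^{\mathbb{C}}=j_n(g_n)$ is \emph{purely formal} at the level of Riemann sums, and then to pass to the limit using the continuity of $j_n$ established in Lemma \ref{lem: cty of j_n}.

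First I would record that the family $\{j_n\}_{n\geqslant1}$ is compatible with the graded tensor-algebra structure. By the very construction of the complexification, $j_n$ is the real-linear injective map determined by $j_n(v_1\otimes\cdots\otimes v_n)=(v_1\otimes_{\mathbb{R}}1)\otimes\cdots\otimes(v_n\otimes_{\mathbb{R}}1)$ for $v_1,\dots,v_n\in V$; in particular $j_1$ is the canonical inclusion $V\hookrightarrow V_{\mathbb{C}}$, and $j_{m+n}(\xi\otimes\eta)=j_m(\xi)\otimes j_n(\eta)$ for $\xi\in V^{\otimes_{a}m}$ and $\eta\in V^{\otimes_{a}n}$. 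Consequently, for any partition $\pi=\{0=t_0<t_1<\cdots<t_N=T\}$ of $[0,T]$, writing $\Delta_i w\triangleq w_{t_{i+1}}-w_{t_i}\in V$ and
\[
S_\pi^{(n)}\triangleq\sum_{0\leqslant i_1<i_2<\cdots<i_n\leqslant N-1}\Delta_{i_1}w\otimes\cdots\otimes\Delta_{i_n}w\in V^{\otimes_{a}n},
\]
the corresponding sum $S_\pi^{(n),\mathbb{C}}$ built from the same increments inside $V_{\mathbb{C}}^{\otimes_{a}n}$ satisfies $S_\pi^{(n),\mathbb{C}}=j_n\big(S_\pi^{(n)}\big)$, simply because $j_n$ is real-linear and respects tensor products.

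Next I would invoke the standard fact that, for a continuous path of finite length over an arbitrary Banach space, the $n$-th degree component of the signature is the limit of the sums $S_\pi^{(n)}$ as the mesh $|\pi|$ tends to zero --- this is elementary Riemann--Stieltjes (equivalently Young) integration, and the required submultiplicative estimates are available both for the fixed reasonable norm on $V^{\otimes n}$ and for the injective norm on $V_{\mathbb{C}}^{\otimes n}$. Since $j_1$ is isometric (recall $\|v\|_T=\|v\|_V$), the path $w$ has the same modulus of continuity and the same length when regarded as a path in $V_{\mathbb{C}}$, so it is legitimate to use a common sequence of partitions in both spaces: $S_\pi^{(n)}\to g_n$ in $V^{\otimes n}$ and $S_\pi^{(n),\mathbb{C}}\to g_n^{\mathbb{C}}$ in $V_{\mathbb{C}}^{\otimes n}$. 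By Lemma \ref{lem: cty of j_n}, $j_n\big(S_\pi^{(n)}\big)\to j_n(g_n)$ in $V_{\mathbb{C}}^{\otimes n}$; comparing this with $S_\pi^{(n),\mathbb{C}}=j_n\big(S_\pi^{(n)}\big)\to g_n^{\mathbb{C}}$ and using uniqueness of limits yields $g_n^{\mathbb{C}}=j_n(g_n)$.

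The content here is really a \emph{consistency check}: the only points that need any care are the homomorphism property of $j_n$ at the level of algebraic tensors (immediate from the universal property of the tensor product together with the explicit description of $V_{\mathbb{C}}$) and the fact that the real and complex signatures may be computed along the same partitions (immediate from the isometry $\|\cdot\|_T|_V=\|\cdot\|_V$). I do not expect any essential obstacle; alternatively one could run the same argument through piecewise linear interpolations of $w$, whose signature is the explicit tensor $\exp^{\otimes}(\Delta_0)\otimes\cdots\otimes\exp^{\otimes}(\Delta_{N-1})$ with $\exp^{\otimes}(v)\triangleq\sum_{k\geqslant0}v^{\otimes k}/k!$ (Chen's identity), which is again manifestly intertwined by the maps $j_n$.
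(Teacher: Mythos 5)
Your proposal is correct and follows essentially the same route as the paper: the paper's (very brief) proof likewise notes that $j_n$ intertwines the discrete Riemann sum approximations of $g_n$ and $g_n^{\mathbb{C}}$ and then concludes by the continuity of $j_n$ from Lemma \ref{lem: cty of j_n}. Your write-up simply fills in the details (real-linearity and multiplicativity of $j_n$, convergence of the sums in both norms) that the paper leaves implicit.
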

\begin{proof}
Let $\xi_{m}$ be the discrete Riemann sum approximation of $g_{n}.$
Then $j_{n}(\xi_{m})$ is the discrete Riemann sum of $g_{n}^{\mathbb{C}}.$
The result then follows from the continuity of $j_{n}$ stated in
Lemma \ref{lem: cty of j_n}.
\end{proof}
\begin{rem}
Lemma \ref{lem: consistency of signature} remains true for any arbitrary
rough path and its complexification. In addition, we only need to be careful
about complexification of norms in the infinite dimensional setting as the
finite dimensional case is trivial in terms of norm comparison.
\end{rem}
It is easy to see that the notion of group-like property carries through
to the complex case, and the complex signature of a weakly geometric
complex rough path (in particular, of a complex bounded variation
path) is group-like.

From now on, we fix $d\geqslant3$ to be an odd integer and $\lambda\triangleq{\rm e}^{2\pi i/d}$
to be a $d$-th root of unity. To prove Theorem \ref{thm: nonvanishing}
in this case, let $w:[0,T]\rightarrow V$ be a continuous path with
finite length starting at the origin whose signature vanishes identically
outside $(d).$ We assume on the contrary that $w$ is not tree-like
(equivalently it has non-trivial signature) and look for a contradiction.
As before in the real case, we may assume without loss of generality
that $w$ is tree-reduced. It is apparent from assumption that the
path $z_{t}\triangleq\lambda\cdot w_{t}$ has the same \textit{complex}
signature as $w_{t}$. The main difficulty here is that $z$ has different
\textit{real} signature from $w$ (when regarding $V_{\mathbb{C}}=V\oplus V$
as a real vector space) so that the real uniqueness result does not
apply. 

To explain the underlying idea, assume for the moment that $\dim V<\infty$
and $w$ is a simple and closed path. If $w$ is non-trivial, it is not hard
to construct a \textit{real} continuous one form $\phi=\sum_{j}\phi_{j}(x)dx^{j}$
\textit{over} $V$ supported inside some small neighbourhood $B\subseteq V$
of $q\in\mathrm{Im}(w)\backslash\{0\}$, such that $\int_{0}^{T}\phi(dw)=1.$
Since over $V_\mathbb{C}$, $B$ is entirely separated from $\mathrm{Im}(z)$,
by zero extension each $\phi_{j}$ extends to a continuous
function $\bar{\phi}_{j}$ over the compact subset $K\triangleq{\rm Im}(w)\cup{\rm Im}(z)\subseteq V_{\mathbb{C}},$
and therefore $\phi$ extends to a continuous one form $\bar{\phi}=\sum_{j}\bar{\phi}_{j}(z)dz^{j}$
(not containing $d\bar{z}^{j}$!) over $V_{\mathbb{C}}$. In particular, from the construction
we see that 
\[
\int_{0}^{T}\bar{\phi}(dw_{t})=1\ {\rm and}\ \int_{0}^{T}\bar{\phi}(dz_{t})=0.
\]
The key to reaching a contradiction is the possibility of
approximating $\bar{\phi}$ by \textit{holomorphic} polynomial one forms (i.e.
polynomial in the complex variables but not in their conjugates).
This turns out to be a rather deep problem in the theory of
holomorphic polynomial approximations in several complex variables, and it can only
be achieved in some very special situations (fortunately, our situation
is special enough). Once we are able to replace $\bar{\phi}$ by a complex
polynomial one form $p$, a contradiction is then immediate since
the integral depends only on $p$ and the complex signature according
to the shuffle product formula.
If $V$ is infinite dimensional and $w$ is a general tree-reduced
path, one needs to work with truncated signature paths and apply finite
dimensional reduction in a proper way similar to the strategy developed in \cite{BGLY16}
when proving the real uniqueness result.

We first review a few results on holomorohic polynomial approximations in
several complex variables that will be needed for our problem.
\begin{defn}
The \textit{polynomial convex hull} of a compact subset $K\subseteq\mathbb{C}^{n}$
is defined as 
\[
\hat{K}\triangleq\left\{ z\in\mathbb{C}^{n}:|p(z)|\leqslant\sup_{w\in K}|p(w)|\ {\rm for\ all\ complex\ polynomials}\ p\right\} .
\]
A compact subset $K$ is called \textit{polynomially convex} if $\hat{K}=K$. 
\end{defn}
Polynomial convexity is closely related to uniform polynomial approximations,
as seen from the following result.

\begin{thm}[c.f. Levenberg \cite{Levenberg06}, Page 97, Corollary]\label{thm: polynomial approximation}

Let $K$ be a polynomially convex compact subset of $\mathbb{C}^{n}$
with zero $2$-dimensional Hausdorff measure. Then every continuous
function over $K$ can be uniformly approximated by holomorphic polynomials.

\end{thm}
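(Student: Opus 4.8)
).

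The plan is to prove this by duality, reducing the several-variable statement to the classical one-variable Hartogs--Rosenthal theorem via coordinate projections; it is essentially the $\mathbb{C}^n$ version of Hartogs--Rosenthal. The hypothesis $\mathcal{H}^{2}(K)=0$ will be used only through the observation that every coordinate projection maps $K$ onto a compact planar set of Lebesgue area zero, and polynomial convexity will be used only through the Oka--Weil theorem.

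Write $P(K)\subseteq C(K)$ for the uniform closure of the algebra of holomorphic polynomials restricted to $K$. By Hahn--Banach and the Riesz representation theorem, the conclusion $P(K)=C(K)$ is equivalent to: every complex regular Borel measure $\mu$ on $K$ with $\int p\,d\mu=0$ for all holomorphic polynomials $p$ vanishes. So fix such a $\mu$. Since $K$ is polynomially convex, the Oka--Weil theorem shows that every function holomorphic on a neighbourhood of $K$ is a uniform limit on $K$ of holomorphic polynomials, hence $\mu$ annihilates all such functions as well. The key claim is then: for every measure $\nu$ on $K$ annihilating the holomorphic polynomials, every coordinate index $j$, and every holomorphic polynomial $q$, one has $\int \bar z_j\,q\,d\nu=0$. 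Granting this, one proves $\int z^{\alpha}\bar z^{\beta}\,d\mu=0$ for all multi-indices $\alpha,\beta$ by induction on $|\beta|$: the case $\beta=0$ is the hypothesis; for the step, write $\beta=e_j+\beta'$, observe that $\mu':=\bar z^{\beta'}\mu$ again annihilates all holomorphic polynomials (by the inductive hypothesis), and apply the claim to $\mu'$ with $q=z^{\alpha}$. Once all these ``moments'' vanish, Stone--Weierstrass forces $\mu=0$.

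To prove the claim, fix $j$ (say $j=1$) and a holomorphic polynomial $q$, and define $\Phi(w):=\int_K q(z)/(z_1-w)\,d\mu(z)$ for $w\in\mathbb{C}$, where $\pi_1:\mathbb{C}^n\to\mathbb{C}$ is the first coordinate projection. For any $w\notin\pi_1(K)$ the integrand $z\mapsto q(z)/(z_1-w)$ is holomorphic on the open neighbourhood $\{z_1\neq w\}$ of $K$, hence $\Phi(w)=0$. But $\Phi$ is precisely the one-variable Cauchy transform of the pushforward measure $\pi_{1*}(q\mu)$, whose support lies in $\pi_1(K)$; since $\pi_1$ is Lipschitz and $\mathcal{H}^{2}(K)=0$, the set $\pi_1(K)\subseteq\mathbb{C}$ has zero Lebesgue area, so $\Phi$ vanishes almost everywhere and hence, being locally integrable, vanishes as a distribution. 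As $\bar\partial\Phi$ is a nonzero constant multiple of $\pi_{1*}(q\mu)$ in the distributional sense, we conclude $\pi_{1*}(q\mu)=0$; testing this against $t\mapsto\bar t$ yields $\int_K \bar z_1\,q\,d\mu=0$. The same argument applies for each $j$, which proves the claim.

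The step I expect to demand the most care is this last one. First, one must verify that $\Phi$ vanishes on all of $\mathbb{C}\setminus\pi_1(K)$, not merely on its unbounded component; this is exactly where polynomial convexity enters, since it is Oka--Weil that lets $\mu$ annihilate the holomorphic germ $q/(z_1-w)$ rather than just polynomials. That polynomial convexity is genuinely needed is visible from the unit circle sitting inside a complex line of $\mathbb{C}^{n}$: it has vanishing $\mathcal{H}^{2}$ but is not polynomially convex, and $\bar z_1$ is not uniformly approximable there by holomorphic polynomials. Second, one must justify the Cauchy--Pompeiu identity $\bar\partial\Phi=c\,\pi_{1*}(q\mu)$ for a mere measure, and recall the harmless identification of $\mathcal{H}^{2}$ with Lebesgue measure in the plane. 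Everything else is bookkeeping; the role of $\mathcal{H}^{2}(K)=0$ is solely to make all $n$ coordinate images $\pi_j(K)$ simultaneously area-null, so that the projection trick delivers the one-variable Hartogs--Rosenthal conclusion in each coordinate.
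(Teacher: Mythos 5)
Your argument is correct, but note that the paper does not prove this statement at all: Theorem \ref{thm: polynomial approximation} is imported as a black box from Levenberg's survey (Page 97, Corollary) and is used only as an external input to the complexification argument. What you have written is essentially a self-contained proof of that cited result, namely the several-variables Hartogs--Rosenthal argument: dualize via Hahn--Banach and Riesz representation, upgrade ``$\mu$ annihilates polynomials'' to ``$\mu$ annihilates all functions holomorphic near $K$'' by Oka--Weil (this is exactly where polynomial convexity is used, and your unit-circle example correctly shows it cannot be dropped), then kill the antiholomorphic moments one coordinate at a time by showing the Cauchy transform of $\pi_{j*}(q\mu)$ vanishes off the area-null set $\pi_{j}(K)$, hence vanishes as a distribution, hence $\pi_{j*}(q\mu)=0$ since $\bar\partial$ of the Cauchy transform recovers the measure; the induction on $|\beta|$ and the final Stone--Weierstrass step are sound, since $\bar z^{\beta'}\mu$ is again an annihilating measure and the polynomials in $z,\bar z$ form a self-adjoint point-separating algebra. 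The ingredients you flag as needing care are the right ones and are standard: local integrability of the Cauchy transform of a finite compactly supported measure, the distributional identity $\bar\partial(\nu\ast\frac{1}{\pi z})=\nu$, the fact that Lipschitz maps do not increase $\mathcal{H}^{2}$, and the identification of $\mathcal{H}^{2}$ with planar Lebesgue measure. So relative to the paper your proposal adds content rather than diverging from it; it buys a self-contained justification of the approximation theorem, at the cost of invoking Oka--Weil and the $\bar\partial$-calculus for measures, which the paper avoids by citation.
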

\begin{example}
(1) Every compact subset $K$ of $\mathbb{R}^{n}$, where $\mathbb{R}^{n}$
is viewed as the real part of $\mathbb{C}^{n}$, is polynomially convex.
Therefore, any continuous function on the image of a bounded variation
path in $\mathbb{R}^{n}$ can be uniformly approximated by complex
polynomials. This can also be seen by applying the more standard real
polynomial approximation theorems (e.g. Berstein's theorem) and regard
a real polynomial as a complex polynomial in the natural way.

(2) Let $K$ be the unit circle in $\mathbb{C}^{1}.$ Then the polynomial
convex hull of $K$ is the unit disk. This partly explains why not
every continuous function on $S^{1}$ can be uniformly approximated
by holomorphic polynomials which is consistent with Cauchy's theorem.
\end{example}
The following result, which gives a way of verifying polynomial convexity
in some special situations, is crucial for us.

\begin{thm}[c.f. Weinstock \cite{Weinstock88}, Theorem 1]\label{thm: W-thm}

Let $A$ be a real $n\times n$ matrix which does not have purely
imaginary eigenvalues of modulus greater than one. Define $M\triangleq(A+i)\mathbb{R}^{n}\subseteq\mathbb{C}^{n}.$
Then every compact subset of $M\cup\mathbb{R}^{n}\subseteq\mathbb{C}^{n}$
is polynomially convex. 

\end{thm}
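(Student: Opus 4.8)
This is Weinstock's theorem, and I outline how I would reconstruct a proof. The plan is to verify the defining inequality of polynomial convexity by hand: given a compact $K\subseteq M\cup\mathbb{R}^{n}$ and $p\notin K$, I will produce an entire function $h$ with $|h(p)|>\sup_{K}|h|$; since such an $h$ is a uniform limit of polynomials on the compact set $K\cup\{p\}$, a high-order Taylor polynomial then separates $p$ from $K$. As a preliminary reduction, using that the polynomial hull of a Cartesian product is the product of the hulls, I would decompose $A$ into its real Jordan blocks and treat $\mathbb{R}^{n}$ and $M$ one block at a time, with $A$ in real canonical form. The case $p\in(M\cup\mathbb{R}^{n})\setminus K$ is comparatively soft: $p$ then lies in one of the two totally real subspaces, which meet only at the origin, and one separates it by combining real polynomial approximation on that subspace with the Gaussian-type functions constructed below. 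The essential case is $p\notin M\cup\mathbb{R}^{n}$.

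For that case the core construction seeks a complex symmetric matrix $R$, depending on $p$ and $A$, such that $h(z)=\exp(z^{T}Rz)$ satisfies $\mathrm{Re}(z^{T}Rz)\leqslant0$ for all $z\in M\cup\mathbb{R}^{n}$, with $\mathrm{Re}(z^{T}Rz)\to-\infty$ along $M\cup\mathbb{R}^{n}$ as $|z|\to\infty$ (so that $\sup|h|$ over the unbounded set is attained on a fixed ball and one only competes against $K$), while $\mathrm{Re}(p^{T}Rp)>0$. Writing $z=u+iv$, so that $\mathbb{R}^{n}=\{v=0\}$ and $M=\{u=Av\}$, the nonpositivity of $z\mapsto\mathrm{Re}(z^{T}Rz)$ on the two subspaces unwinds into a pair of real quadratic-form inequalities whose coefficient matrices are assembled from $\mathrm{Re}R$, $\mathrm{Im}R$ and $A$; in particular, if one restricts to real symmetric $R=B$, the Sylvester-type relation $BA+A^{T}B=0$ is exactly what makes $z^{T}Bz$ real (not merely bounded) on $M$ as well as on $\mathbb{R}^{n}$. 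Thus the problem becomes the solvability, at a prescribed exterior point, of a system of linear matrix (in)equalities parametrised by $A$.

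I expect the resolution of that solvability problem to be the main obstacle, and it is here that the hypothesis on the eigenvalues of $A$ is used. The solution space of $BA+A^{T}B=0$, together with the required sign conditions, can be analysed block by block from the Jordan data of $A$: real eigenvalues and genuinely complex eigenvalues are harmless, while for a purely imaginary eigenvalue $i\mu$ the construction succeeds precisely when $|\mu|\leqslant1$ and provably fails when $|\mu|>1$ --- in the latter regime $M\cup\mathbb{R}^{n}$ actually acquires analytic disks in its hull, so the hypothesis is sharp. The base case $A=0$ reduces to the classical polynomial convexity of $\mathbb{R}^{n}\cup i\mathbb{R}^{n}$, which one sees immediately because the functions $z_{j}z_{k}$ are real on $\mathbb{R}^{n}\cup i\mathbb{R}^{n}$ while an exterior point cannot keep all of them real (an antisymmetric matrix of rank at most one vanishes). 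Carrying out this Jordan-block case analysis, including the non-diagonalisable blocks, and then reassembling via the product reduction --- together with the routine remark that the resulting polynomial hull, lying inside the totally real set $M\cup\mathbb{R}^{n}$, cannot be strictly larger than $K$ --- would complete the argument.
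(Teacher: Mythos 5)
First, a point of comparison: the paper does not prove this statement at all --- Theorem \ref{thm: W-thm} is imported verbatim from Weinstock \cite{Weinstock88} and used as a black box, so there is no in-paper proof to measure your sketch against; it has to stand on its own as a reconstruction of Weinstock's theorem. As such it correctly identifies the right flavour of argument (entire functions $\exp(z^{T}Rz)$ with a quadratic exponent adapted to $A$, plus a spectral analysis of when the required $R$ exists), but it is an outline whose load-bearing steps are either invalid as stated or explicitly deferred.

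Concretely: (i) the Jordan-block reduction does not work the way you describe. Passing to real canonical form by a real linear change of complex coordinates is fine, but if $A=A_{1}\oplus A_{2}$ then $M\cup\mathbb{R}^{n}$ is \emph{not} the Cartesian product of the blockwise unions; that product also contains sets like $M_{1}\times\mathbb{R}^{n_{2}}$. The ``hull of a product is the product of hulls'' fact therefore only places $\hat{K}$ inside a strictly larger set, and it cannot reduce the separation problem to one block at a time; moreover the positivity requirement $\mathrm{Re}(p^{T}Rp)>0$ at a \emph{given} exterior point $p$ couples the blocks, so $R$ cannot simply be assembled as a direct sum. (ii) The case $p\in(M\cup\mathbb{R}^{n})\setminus K$ is not ``comparatively soft''; it is where the statement $\hat{K}=K$ (rather than merely $\hat{K}\subseteq M\cup\mathbb{R}^{n}$) is decided. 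Already for $A=0$ the constraints force $\mathrm{Re}\,R=0$, so every admissible $\exp(z^{T}Rz)$ has modulus identically $1$ on $\mathbb{R}^{n}\cup i\mathbb{R}^{n}$ and cannot damp $K\cap M$, while a real polynomial separating $p$ from $K\cap\mathbb{R}^{n}$ may be enormous on $K\cap M$; the standard way to glue the two subspaces is a Kallin-type separation lemma, which is missing from your outline. For the same reason your closing ``routine remark'' that a hull lying inside $M\cup\mathbb{R}^{n}$ cannot exceed $K$ is not routine: if $A$ has an eigenvalue $\pm i$ (modulus one, not excluded by the stated hypothesis) then $M$ contains a complex line, a circle in it has hull a full disc inside $M$, and the strong conclusion fails --- which also shows your claim that the construction ``succeeds precisely when $|\mu|\leqslant1$'' is wrong at the borderline and that the implicit total-reality of $M$ matters. (iii) Finally, the solvability of the matrix inequalities under the spectral hypothesis --- which is the actual content of Weinstock's theorem --- is acknowledged as ``the main obstacle'' but not carried out. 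So the proposal is a reasonable opening strategy, but it does not yet constitute a proof, and the two places where it waves its hands (block assembly and the gluing of the two subspaces) are precisely where the difficulty lives.
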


We now return to our signature problem. Recall that $d\geqslant3$ is
an odd integer and $\lambda={\rm e}^{i\theta}$ ($\theta\triangleq2\pi/d$)
is a $d$-th root of unity. For each $N\geqslant1,$ define the real
and complex spaces 
\[
\tilde{T}_{{\rm inj}}^{(N)}(V)\triangleq\bigoplus_{\substack{k=1\\
d\nmid k
}
}^{N}V^{\otimes_{{\rm inj}}k},\ \tilde{T}^{(N)}(V_{\mathbb{C}})\triangleq\bigoplus_{\substack{k=1\\
d\nmid k
}
}^{N}V_{\mathbb{C}}^{\otimes k},
\]
respectively, where ``inj'' means injective tensor norm. Note that
$\tilde{T}_{{\rm inj}}^{(N)}(V)$ is canonically embedded inside $\tilde{T}^{(N)}(V_{\mathbb{C}})$
as its real part. Define the dilation operator $\delta_{\lambda}:\tilde{T}^{(N)}(V_{\mathbb{C}})\rightarrow\tilde{T}^{(N)}(V_{\mathbb{C}})$
in the usual way by $V_{\mathbb{C}}^{\otimes k}\ni g_{k}\mapsto\lambda^{k}g_{k}.$
As will be clear soon, the reason we work in the space $\tilde{T}^{(N)}(V_{\mathbb{C}})$
instead of the more traditional truncated tensor algebra $T^{(N)}(V_{\mathbb{C}})$ is for technical convenience
of applying Theorem \ref{thm: W-thm} (c.f. Lemma \ref{lem: p-convex lemma} below). 
\begin{lem}
\label{lem: linear independence lemma}Let $E$ be a real vector space
and let $\left\{ v_{1}^{j}+\cdots+v_{r_{j}}^{j}:1\leqslant j\leqslant n\right\} $
be a linearly independent set. Then one can choose $v_{l_{j}}^{j}$
($1\leqslant l_{j}\leqslant r_{j}$) for each $j,$ such that $\{v_{l_{j}}^{j}:1\leqslant j\leqslant n\}$
are linearly independent.
\end{lem}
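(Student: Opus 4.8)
The plan is to deduce everything from the multilinearity of the exterior product (equivalently, of the determinant). Write $w^{j}\triangleq v_{1}^{j}+\cdots+v_{r_{j}}^{j}$ for $1\leqslant j\leqslant n$; by hypothesis $\{w^{1},\dots,w^{n}\}$ is linearly independent. All the vectors in sight lie in the finite-dimensional subspace they span, so I may and do assume $E$ is finite dimensional, in which case $\Lambda^{n}E$ is an honest vector space and the standard criterion is available: vectors $u_{1},\dots,u_{n}\in E$ are linearly independent if and only if $u_{1}\wedge\cdots\wedge u_{n}\neq0$ in $\Lambda^{n}E$.

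First I would record that $w^{1}\wedge\cdots\wedge w^{n}\neq0$, which is precisely the assumed linear independence. Expanding this wedge by $n$-linearity gives
\[
w^{1}\wedge\cdots\wedge w^{n}=\Bigl(\sum_{i_{1}=1}^{r_{1}}v_{i_{1}}^{1}\Bigr)\wedge\cdots\wedge\Bigl(\sum_{i_{n}=1}^{r_{n}}v_{i_{n}}^{n}\Bigr)=\sum_{(i_{1},\dots,i_{n})}v_{i_{1}}^{1}\wedge\cdots\wedge v_{i_{n}}^{n},
\]
the last sum ranging over all tuples with $1\leqslant i_{j}\leqslant r_{j}$. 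Since the left-hand side is nonzero and the sum on the right is finite, at least one summand must be nonzero, say $v_{l_{1}}^{1}\wedge\cdots\wedge v_{l_{n}}^{n}\neq0$. By the criterion above, $\{v_{l_{1}}^{1},\dots,v_{l_{n}}^{n}\}$ is linearly independent, which is the required selection.

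An entirely equivalent route avoids exterior algebra altogether: choose linear functionals $\phi_{1},\dots,\phi_{n}$ on the span of the $v_{i}^{j}$ with $\det\bigl(\phi_{a}(w^{b})\bigr)_{1\leqslant a,b\leqslant n}\neq0$ (possible because the $w^{j}$ are independent), expand this determinant by multilinearity in its columns, and conclude that for some tuple the matrix $\bigl(\phi_{a}(v_{l_{b}}^{b})\bigr)_{a,b}$ is invertible, forcing $\{v_{l_{1}}^{1},\dots,v_{l_{n}}^{n}\}$ to be linearly independent. I do not expect a genuine obstacle here: the only points requiring a line of care are the reduction to finite dimensions (or, in the functional version, the existence of the separating functionals), both routine. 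A direct induction on $n$ also works but is clumsier, since after choosing independent summands for $w^{1},\dots,w^{n-1}$ one may be forced to re-choose them to accommodate $w^{n}$; the multilinearity argument sidesteps this bookkeeping entirely, so that is the version I would present.
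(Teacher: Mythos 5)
Your proof is correct, but it takes a genuinely different route from the paper. You reduce to the finite-dimensional span, expand $w^{1}\wedge\cdots\wedge w^{n}$ (or the corresponding determinant of functional values) by multilinearity, and pick a nonzero term; every step checks out, including the finite-dimensional reduction and the criterion that a nonzero $n$-fold wedge characterizes independence. The paper instead runs a greedy exchange argument: it replaces the sums one at a time, noting that if no summand $v_{l_{1}}^{1}$ kept $\{v_{l_{1}}^{1},v^{2},\cdots,v^{n}\}$ independent, then every summand of $v^{1}$ --- and hence $v^{1}$ itself --- would lie in ${\rm Span}\{v^{2},\cdots,v^{n}\}$, contradicting the hypothesis; it then iterates, always retaining the not-yet-replaced sums $v^{j+1},\cdots,v^{n}$ in the working set. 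This last point means the worry you raise about the inductive route (having to re-choose earlier selections to accommodate $w^{n}$) does not actually arise in the paper's formulation: because the untouched sums stay in the set at every stage, each choice is made once and never revisited. What each approach buys: the paper's argument is completely elementary and works verbatim in an arbitrary real vector space with no mention of exterior powers, dual functionals, or a finite-dimensional reduction, while yours is a clean one-shot argument that trades the (mild) induction bookkeeping for the machinery of multilinearity. Either proof would serve the role the lemma plays in Lemma \ref{lem: p-convex lemma}.
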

\begin{proof}
We write $v^{j}\triangleq v_{1}^{j}+\cdots+v_{r_{j}}^{j}.$ Then there exists
at least one $v_{l_{1}}^{1}$ such that $\{v_{l_{1}}^{1},v^{2},\cdots,v^{n}\}$
are linearly independent, for otherwise $v^{1}$ will be linearly dependent
on $\{v^{2},\cdots,v^{n}\}$ which is a contradiction. Similarly,
there exists at least one $v_{l_{2}}^{2}$ such that $\{v_{l_{1}}^{1},v_{l_{2}}^{2},v^{3},\cdots,v^{n}\}$
are linearly independent. Now one can proceed by induction.
\end{proof}
\begin{lem}
\label{lem: p-convex lemma}Let $L:\tilde{T}_{{\rm inj}}^{(N)}(V)\rightarrow\mathbb{R}^{n}$
be a real surjective continuous linear map, and extend $L$ to a complex continous
linear map $\bar{L}:\tilde{T}^{(N)}(V_{\mathbb{C}})\rightarrow\mathbb{C}^{n}$
in the canonical way by 
\[
\bar{L}(u+iv)\triangleq L(u)+iL(v).
\]
Let $M\triangleq\delta_{\lambda}(\tilde{T}_{{\rm inj}}^{(N)}(V))\subseteq\tilde{T}^{(N)}(V_{\mathbb{C}}).$
Then every compact subset of $\bar{L}(M)\cup\mathbb{R}^{n}$ is polynomially
convex. 
\end{lem}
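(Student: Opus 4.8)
The plan is to reduce the statement to a finite-dimensional picture so that Theorem \ref{thm: W-thm} applies. First I would observe that it suffices to work on a finite-dimensional real subspace: since $L:\tilde{T}_{{\rm inj}}^{(N)}(V)\to\mathbb{R}^{n}$ is surjective, I can pick a finite-dimensional real subspace $W\subseteq\tilde{T}_{{\rm inj}}^{(N)}(V)$ on which $L$ is already surjective, in fact choose $W$ of dimension exactly $n$ so that $L|_{W}:W\to\mathbb{R}^{n}$ is a real linear isomorphism. Then $\bar{L}|_{W_{\mathbb{C}}}:W_{\mathbb{C}}\to\mathbb{C}^{n}$ is a complex linear isomorphism, $W_{\mathbb{C}}$ being the complexification of $W$ sitting inside $\tilde{T}^{(N)}(V_{\mathbb{C}})$. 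The key point is that $\delta_{\lambda}$ does \emph{not} preserve $W$, so $\bar{L}(M)$ is not simply $\bar{L}(W_{\mathbb{C}})$; I need to understand $\delta_{\lambda}(W)$ and its image. Since $\delta_{\lambda}$ acts as the scalar $\lambda^{k}$ on the $k$-th tensor level, $\delta_{\lambda}(W)$ is contained in the real span of $\{W^{\otimes_{\mathbb{R}}}\text{-components rotated by }\lambda^{k}\}$, which lives in a finite-dimensional complex subspace $U_{\mathbb{C}}\subseteq\tilde{T}^{(N)}(V_{\mathbb{C}})$ spanned (over $\mathbb{C}$) by the levelwise components of a basis of $W$. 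So the whole picture — $W$, $\delta_{\lambda}(W)$, and $\bar{L}$ restricted there — takes place inside a single finite-dimensional complex space, and I can transport everything to $\mathbb{C}^{m}$ via a complex-linear coordinate chart, with $\bar{L}$ becoming a complex-linear surjection $\mathbb{C}^{m}\to\mathbb{C}^{n}$.

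Next I would set up the matrix input to Theorem \ref{thm: W-thm}. After the reduction, $\mathbb{R}^{n}$ (the real part of $\mathbb{C}^{n}$) is $\bar{L}(W)$ where $W$ is viewed via its real embedding. I want to write $\bar{L}(M)$, equivalently $\bar{L}(\delta_{\lambda}(W))$, in the form $(A+i)\mathbb{R}^{n}$ for a suitable real $n\times n$ matrix $A$. The point is that on the $k$-th level, multiplication by $\lambda^{k}=e^{ik\theta}$ sends a real vector $x_{k}$ to $(\cos k\theta)x_{k}+i(\sin k\theta)x_{k}$. Assembling this levelwise across a basis of $W$ and then applying the isomorphism $L|_W$, one gets that $\bar{L}(\delta_{\lambda}(x))$ for $x\in W$ has real part $L(Cx)$ and imaginary part $L(Sx)$, where $C$ and $S$ are the real operators on $W$ acting as $(\cos k\theta)\cdot\mathrm{id}$ and $(\sin k\theta)\cdot\mathrm{id}$ on the degree-$k$ component. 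Because only degrees $k$ with $d\nmid k$ appear in $\tilde{T}^{(N)}$, we have $\sin k\theta\neq 0$ for every relevant $k$ (here $\theta=2\pi/d$ and $d\nmid k$), so $S$ is invertible on $W$. Hence $\bar{L}(\delta_{\lambda}(W))=\{L(Cx)+iL(Sx):x\in W\}=\{(L C S^{-1})y+iy : y\in\mathbb{R}^{n}\}=(A+i)\mathbb{R}^{n}$ with $A\triangleq L\circ C\circ S^{-1}\circ (L|_W)^{-1}$, a genuine real $n\times n$ matrix.

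Finally I would check the eigenvalue hypothesis of Theorem \ref{thm: W-thm}: $A$ must have no purely imaginary eigenvalue of modulus strictly greater than one. Since $C$ and $S$ are simultaneously diagonalizable (both act by scalars on each tensor degree), $A$ is conjugate to the operator that acts on the degree-$k$ block by the scalar $\cot(k\theta)$, which is a \emph{real} number; so every eigenvalue of $A$ is real, and in particular $A$ has \emph{no} nonzero purely imaginary eigenvalues at all. Thus the hypothesis of Theorem \ref{thm: W-thm} is trivially satisfied, and we conclude that every compact subset of $M'\cup\mathbb{R}^{n}=(A+i)\mathbb{R}^{n}\cup\mathbb{R}^{n}$ is polynomially convex in $\mathbb{C}^{n}$; transporting back through the complex-linear chart (polynomial convexity is preserved by complex affine isomorphisms) gives the claim for every compact subset of $\bar{L}(M)\cup\mathbb{R}^{n}$. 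The main obstacle I anticipate is the bookkeeping in the finite-dimensional reduction — making sure that the chosen subspace $W$, its $\delta_{\lambda}$-image, and the target $\mathbb{R}^{n}$ all sit compatibly inside one $\mathbb{C}^{m}$ with $\bar{L}$ becoming complex-linear, and that the real structure ``$\mathbb{R}^{n}=$ real part of $\mathbb{C}^{n}$'' in Weinstock's theorem matches $\bar{L}(W)$ with $W$ in its real embedding rather than some twisted real form; once that is pinned down, the eigenvalue computation via the $\cot(k\theta)$ observation is the clean payoff and explains precisely why the construction uses $\tilde{T}^{(N)}$ (excluding multiples of $d$) rather than the full truncated tensor algebra.
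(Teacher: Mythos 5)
Your overall route is the same as the paper's: exhibit $\bar{L}(M)$ as a maximal totally real subspace of the form $(A+i)\mathbb{R}^{n}$, observe that $A$ can be taken with the real eigenvalues $\cot(k\theta)$ (available precisely because $d\nmid k$ forces $\sin k\theta\neq0$, which is indeed why $\tilde{T}^{(N)}$ is used), and then invoke Theorem \ref{thm: W-thm}. The genuine gap is in your finite-dimensional reduction, at the sentence ``I want to write $\bar{L}(M)$, equivalently $\bar{L}(\delta_{\lambda}(W))$''. These are not equivalent: $M$ is the $\delta_{\lambda}$-image of the \emph{whole} space $\tilde{T}_{{\rm inj}}^{(N)}(V)$, and $\delta_{\lambda}$ (equivalently your operators $C,S$) acts degree by degree, so it does not preserve $\ker L$; hence $\bar{L}\circ\delta_{\lambda}$ does not factor through $L$, and the part of the space you quotient away contributes new real directions to $\bar{L}(M)$. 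Concretely, take $V=\mathbb{R}$, $d=3$, $N=2$, $n=1$, and $L(x_{1}\oplus x_{2})=x_{1}+x_{2}$ on $\tilde{T}^{(2)}_{{\rm inj}}(V)=\mathbb{R}\oplus\mathbb{R}$. Then $\bar{L}(M)=\mathbb{R}\lambda+\mathbb{R}\lambda^{2}$, which is all of $\mathbb{C}$ (real dimension $2$), whereas $\bar{L}(\delta_{\lambda}(W))$ is a single real line for every admissible $W$; no real $1\times1$ matrix $A$ gives $(A+i)\mathbb{R}=\mathbb{C}$, so the Weinstock input cannot be applied to $\bar{L}(M)$ in the way you propose. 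There are secondary defects in the same computation: $S$ does not map $W$ into $W$ (your $W$ need not be spanned by homogeneous tensors), so ``$S$ is invertible on $W$'' is not meaningful as stated, and $L\circ S|_{W}$ can even vanish (in the example above with $W$ the diagonal of $\mathbb{R}\oplus\mathbb{R}$ one gets $S(W)=\ker L$); likewise $A\triangleq L\circ C\circ S^{-1}\circ(L|_{W})^{-1}$ does not satisfy $L(Cx)=A\,L(Sx)$ for $x\in W$, because $(L|_{W})^{-1}L(Sx)\neq Sx$ in general.

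For comparison, the paper never passes to a complement of $\ker L$: it uses Lemma \ref{lem: linear independence lemma} to pick homogeneous tensors $\xi_{l_{j}}$ with $\{L(\xi_{l_{1}}),\dots,L(\xi_{l_{n}})\}$ a basis of $\mathbb{R}^{n}$ and defines $A$ directly on this basis by $A\,L(\xi_{l_{j}})=\cot(l_{j}\theta)L(\xi_{l_{j}})$, identifying $\bar{L}(M)$ with the real span of the vectors $(\cot(l_{j}\theta)+i)L(\xi_{l_{j}})$. Be aware that this dimension count is exactly the delicate point: $\bar{L}(M)$ is spanned by \emph{all} vectors $(\cot(k\theta)+i)L(\xi_{k})$, over every admissible degree $k$ and every homogeneous $\xi_{k}$, and the toy example above shows that for a completely arbitrary surjective $L$ it can fail to be $n$-dimensional, so some compatibility between $L$ and the grading has to enter the argument (the write-up in the paper is very terse at this step). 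The natural repair, both for your argument and for the application in the main proof, is to make $L$ degree-adapted: replace $L$ by the map $\xi=\sum_{k}\xi_{k}\mapsto(L(\xi_{k}))_{k}$, taking values in the direct sum of the subspaces $E_{k}\triangleq L(V^{\otimes_{{\rm inj}}k})$, one block per degree. The separation properties used later survive this replacement because the original $L$ factors through the new map, and now one honestly has $\bar{L}(M)=(A+i)\mathbb{R}^{m}$ with $A$ acting on the block $E_{k}$ by the real scalar $\cot(k\theta)$, so your concluding eigenvalue observation applies verbatim. In short, the $\cot(k\theta)$/Weinstock mechanism you identified is the right engine, but the reduction modulo $\ker L$ is invalid, and the fix requires confronting how $L$ interacts with the degree decomposition rather than quotienting it away.
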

\begin{proof}
Since $L$ is surjective, there exist elements $g_{1},\cdots,g_{n}\in\tilde{T}_{{\rm inj}}^{(N)}(V)$
such that $\{L(g_{1}),\cdots,L(g_{n})\}$ form a basis of $\mathbb{R}^{n}.$
Since each $g_{j}$ is a sum of homogeneous tensors, according to
Lemma \ref{lem: linear independence lemma}, we may choose some $\xi_{l_{j}}\in V^{\otimes_{{\rm inj}}l_{j}}$
$(1\leqslant j\leqslant n)$ such that $\{L(\xi_{l_{1}}),\cdots,L(\xi_{l_{n}})\}$
form a basis of $\mathbb{R}^{n}$. In addition, observe that 
\begin{align*}
M & ={\rm Span}_{\mathbb{R}}\left\{ \lambda^{k}\cdot\xi_{k}:1\leqslant k\leqslant N,d\nmid k,\ \xi_{k}\in V^{\otimes_{{\rm inj}}k}\right\} \\
 & ={\rm {\rm Span}}_{\mathbb{R}}\left\{ (\cos k\theta+i\sin k\theta)\cdot\xi_{k}:1\leqslant k\leqslant N,d\nmid k,\ \xi_{k}\in V^{\otimes_{{\rm inj}}k}\right\} .
\end{align*}
Since $d$ is odd, we know that $\sin k\theta\neq0$ for all $k$
not being a multiple of $d.$ Therefore, 
\[
M={\rm Span}_{\mathbb{R}}\left\{ (\cot k\theta+i)\cdot\xi_{k}:1\leqslant k\leqslant N,d\nmid k,\ \xi_{k}\in V^{\otimes_{{\rm inj}}k}\right\} .
\]
It follows that $\bar{L}(M)$ is an $n$-dimensional real subspace
of $\mathbb{C}^{n}$ with basis $\{(\cot l_{j}\theta+i)\cdot L(\xi_{l_{j}}):1\leqslant j\leqslant n\}.$
In particular, if we define a non-degenerate real linear transform
$A:\mathbb{R}^{n}\rightarrow\mathbb{R}^{n}$ by $A(L(\xi_{l_{j}}))\triangleq\cot l_{j}\theta\cdot L(\xi_{l_{j}}),$
then $A$ does not have purely imaginary eigenvalues (indeed, all
eigenvalues of $A$ are given by $\{\cot l_{j}\theta:1\leqslant j\leqslant n\}$),
and $\bar{L}(M)=(A+i)\mathbb{R}^{n}$. According to Theorem \ref{thm: W-thm},
we conclude that every compact subset of $\bar{L}(M)\cup\mathbb{R}^{n}$
is polynomially convex.
\end{proof}
Now we are in a position to give the proof of Theorem \ref{thm: nonvanishing} when $d$ is odd.
Recall that $w:[0,T]\rightarrow V$ is a non-trivial tree-reduced
path with finite length starting at the origin, whose signature vanishes
identically along degrees outside $(d).$ Viewed as paths in $V_{\mathbb{C}},$
we know that $w$ and $z\triangleq\lambda\cdot w$ have the same complex
signature. 

\begin{proof}[Proof of Theorem \ref{thm: nonvanishing} when $d$ is odd]

First of all, since $w$ is non-trivial, let $I\subseteq(0,T)$ be
a compact interval such that $0\notin w(I).$ Let $\mathbb{W}$ and
$\mathbb{Z}$ be the complex signature paths of $w$ and $z$ respectively, which 
live in the infinite complex tensor algebra $T((V_{\mathbb{C}}))\triangleq \Pi_{n=0}^\infty V_\mathbb{C}^{\otimes n}$.
Since $w$ is tree-reduced, we know that $\mathbb{W}$ and $\mathbb{Z}$
are both simple. Also they have the same starting and end points respectively.
Observe that 
\begin{equation}
\mathbb{W}(I)\cap\mathbb{Z}([0,T])=\emptyset,\label{eq: 0th separation}
\end{equation}
for otherwise if $\mathbb{W}_{t}=\mathbb{Z}_{s}=\delta_{\lambda}(\mathbb{W}_{s})$
for some $t\in I$ and $s\in[0,T]$, then $0\neq w_{t}=\lambda w_{s}$
which is absurd. Fix four points $s<s'<t'<t$ in $I.$ It follows
that 
\begin{equation}
\mathbb{W}([s,s'])\cap\mathbb{W}([t',t])=\emptyset\label{eq: first separation}
\end{equation}
and 
\begin{equation}
\mathbb{W}([s',t'])\cap\left(\mathbb{W}([0,s]\cup[t,T])\right)=\emptyset.\label{eq: second separation}
\end{equation}
In addition, from the triangle inequality we know that \[
\left\|\int_{u<t_{1}<\cdots<t_{n}<v}dw_{t_{1}}\otimes\cdots\otimes dw_{t_{n}}\right\|_{V_{\mathbb{C}}^{\otimes n}}\leqslant\frac{\|w\|_{1-{\rm var};[u,v]}}{n!}, \ \ \ \forall n\geqslant1\ \mathrm{and}\ u\leqslant v,
\]and the same is true for the path $z_t$.
Therefore, when $N$ is large, all of the separation
properties (\ref{eq: 0th separation}), (\ref{eq: first separation})
and (\ref{eq: second separation}) are preserved if we consider the
complex truncated signature paths $W^{N}$ and $Z^{N}$ in $T^{(N)}(V_{\mathbb{C}})\triangleq\oplus_{n=0}^N V_\mathbb{C}^{\otimes n}$. Choose $N=dm+1$ with
some large $m$ for this purpose.

We claim that the projections of $W^{N}$ and $Z^{N}$ onto $\tilde{T}^{(N)}(V_{\mathbb{C}})$,
denoted as $\tilde{W}^{N}$ and $\tilde{Z}^{N}$ respectively, preserve
all the previous three separation properties. We only verify (\ref{eq: first separation})
as the other cases can be treated in the same way. Let $u\in[s,s']$
and $v\in[t',t].$ Suppose on the contrary that $\tilde{W}_{u}^{N}=\tilde{W}_{v}^{N}.$
Since $W_{u}^{N}\neq W_{v}^{N},$ we conclude that $\pi_{pd}(W_{u}^{N})\neq\pi_{pd}(W_{v}^{N})$
for some $1\leqslant p\leqslant m,$ where $\pi_{pd}$ denotes the projection onto the $pd$-th component. But we know that $\pi_{1}(W_{u}^{N})=\pi_{1}(W_{v}^{N})\neq0$ since $\tilde{W}^N_u=\tilde{W}^N_v$ and $u,v\in I$. Therefore, using the cross-norm property we have
\begin{align*}
 & \left\Vert \pi_{pd}(W_{u}^{N})\otimes\pi_{1}(W_{u}^{N})-\pi_{pd}(W_{v}^{N})\otimes\pi_{1}(W_{v}^{N})\right\Vert _{V_{\mathbb{C}}^{\otimes(pd+1)}}\\
 & =\left\|\left(\pi_{pd}(W_{u}^{N})-\pi_{pd}(W_{v}^{N})\right)\otimes\pi_{1}(W_{u}^{N})\right\|_{V_{\mathbb{C}}^{\otimes(pd+1)}}\\
 & =\left\|\pi_{pd}(W_{u}^{N})-\pi_{pd}(W_{v}^{N})\right\|_{V_{\mathbb{C}}^{\otimes pd}}\cdot\left\|\pi_{1}(W_{u}^{N})\right\|_{V_\mathbb{C}}\\
 & \neq0,
\end{align*}which implies that
\[
\pi_{pd}(W_{u}^{N})\otimes\pi_{1}(W_{u}^{N})\neq\pi_{pd}(W_{v}^{N})\otimes\pi_{1}(W_{v}^{N}).
\]
According to the shuffle product formula, we conclude that 
\[
\pi_{pd+1}(W_{u}^{N})\neq\pi_{pd+1}(W_{v}^{N}),
\]
which is a contradiction to the assumption $\tilde{W}_{u}^{N}=\tilde{W}_{v}^{N}$ since $d\nmid pd+1$. Therefore,
(\ref{eq: first separation}) holds for the path $\tilde{W}^{N}.$ 

Next, since $\tilde{W}^{N}$ lives in $\tilde{T}_{{\rm inj}}^{(N)}(V)$
(the real part of $\tilde{T}^{(N)}(V_{\mathbb{C}})$), according to
the Hahn-Banach theorem (c.f. \cite{BGLY16}, Lemma 4.5), there exists
a real continuous linear map $L_{1}:\tilde{T}_{{\rm inj}}^{(N)}(V)\rightarrow\mathbb{R}^{n_{1}}$
with some $n_{1},$ such that 
\begin{equation}
L_{1}\left(\tilde{W}^{N}([s,s'])\right)\cap L_{1}\left(\tilde{W}^{N}([t',t])\right)=\emptyset\label{eq: projected separation 1}
\end{equation}
and 
\begin{equation}
L_{1}\left(\tilde{W}^{N}([s',t'])\right)\cap L_{1}\left(\tilde{W}^{N}([0,s]\cup[t,T])\right)=\emptyset.\label{eq: projected separation 2}
\end{equation}
Also for the same reason, there exists a real continuous linear map $f:V\rightarrow\mathbb{R}^{n_{2}}$
with some $n_{2},$ such that $0\notin f(w(I)).$ By taking images
we may assume that $L_{1}$ and $f$ are both surjective. Set $L_{2}\triangleq f\circ\pi_{1}:\tilde{T}_{{\rm inj}}^{(N)}(V)\rightarrow\mathbb{R}^{n_{2}}$
where $\pi_{1}$ is the canonical projection onto the first degree
component. With $n\triangleq n_{1}+n_{2}$, define
\[
L\triangleq L_{1}\oplus L_{2}:\tilde{T}_{{\rm inj}}^{(N)}(V)\rightarrow\mathbb{R}^{n}\cong\mathbb{R}^{n_{1}}\oplus\mathbb{R}^{n_{2}}
\]
and extend $L$ to a complex continuous linear map $\bar{L}:\tilde{T}^{(N)}(V_{\mathbb{C}})\rightarrow\mathbb{C}^{n}$
in the canonical way. It is apparent that the separation properties
(\ref{eq: projected separation 1}) and (\ref{eq: projected separation 2})
are still true in the space $\mathbb{R}^{n}$ with $L_{1}$ replaced
by $L$. Moreover, we claim that in the space $\mathbb{C}^{n}$, we also have 
\begin{equation}
\bar{L}\left(\tilde{W}^{N}(I)\right)\cap\bar{L}\left(\tilde{Z}^{N}([0,T])\right)=\emptyset.\label{eq: projected W/Z separation}
\end{equation}
Indeed, suppose on the contrary that $\bar{L}(\tilde{W}_{t}^{N})=\bar{L}(\tilde{Z}_{s}^{N})$
for some $t\in I$ and $s\in[0,T].$ By looking at the $L_{2}$-component,
we see that 
\[
f(w_{t})=\cos\theta\cdot f(w_{s})+i\sin\theta\cdot f(w_{s}).
\]
This implies that $f(w_{s})=0$ and thus $f(w_{t})=0,$ which is a
contradiction to the construction of $f.$ 

Now take four open subsets $U_{1}$, $U_{2},$ $V_{1},$
$V_{2}$ of $\mathbb{R}^{n}$, such that 
\[
L\left(\tilde{W}^{N}([s,s'])\right)\subseteq U_{1},\ L\left(\tilde{W}^{N}([t',t])\right)\subseteq U_{2},
\]
\[
L\left(\tilde{W}^{N}([s',t'])\right)\subseteq V_{1},\ L\left(\tilde{W}^{N}([0,s]\cup[t,T])\right)\subseteq V_{2},
\]
and 
\[
U_{1}\cap U_{2}=V_{1}\cap V_{2}=\emptyset.
\]
Define $F,G\in C_{c}^{\infty}(\mathbb{R}^{n})$ to be such that 
\[
F=0\ {\rm on}\ V_{2},\ F=1\ {\rm on}\ V_{1},
\]
and 
\[
G=0\ {\rm on}\ U_{1},\ G=1\ {\rm on}\ U_{2}.
\]
Consider the smooth one form $\Phi\triangleq FdG$ over $\mathbb{R}^{n}.$ From the construction, we have
\begin{align*}
 & \int_{0}^{T}\Phi(d(L\tilde{W}^{N})_{u})\\
 & =\left(\int_{0}^{s}+\int_{s}^{s'}+\int_{s'}^{t'}+\int_{t'}^{t}+\int_{t}^{T}\right)\Phi(d(L\tilde{W}^{N})_{u})\\
 & =0+0+\left(G(\tilde{W}_{t'}^{N})-G(\tilde{W}_{s'}^{N})\right)+0+0\\
 & =1-0\\
 & =1.
\end{align*}
Now regard $\Phi=\sum_{j=1}^{n}\Phi_{j}(x)dx^{j}$ as a real continuous
one form over ${\rm Im}(L\tilde{W}^{N})\subseteq\mathbb{R}^{n}$ and
extend it to a continuous one form $\bar{\Phi}\triangleq\sum_{j=1}^{n}\Phi_{j}(z)dz^{j}$
over the compact set 
\[
K\triangleq\bar{L}\left(\tilde{W}^{N}([0,T])\right)\cup\bar{L}\left(\tilde{Z}^{N}([0,T])\right)\subseteq\mathbb{C}^{n}
\]
by zero extension. This is legal because of the separation property
(\ref{eq: projected W/Z separation}) and by construction $\Phi=0$
on $L(\tilde{W}^{N}([0,s]\cup[t,T])).$ It follows that 
\begin{equation}
\int_{0}^{T}\bar{\Phi}(d(\bar{L}\tilde{W}^{N})_{u})=1\ {\rm and}\ \int_{0}^{T}\bar{\Phi}(d(\bar{L}\tilde{Z}^{N})_{u})=0.\label{eq: integral separation}
\end{equation}

From Lemma \ref{lem: p-convex lemma}, we know that $K$ is polynomially
convex. In addition, since $K$ is the union of images of bounded
variation paths, it has zero $2$-dimensional Hausdorff measure. According
to Theorem \ref{thm: polynomial approximation}, we know that $\bar{\Phi}$
can be uniformly approximated over $K$ by holomorphic polynomial one
forms in $\mathbb{C}^{n}.$ In particular, it follows from (\ref{eq: integral separation})
that there exists a holomorphic polynomial one form $P=\sum_{j=1}^{n}P_{j}(z)dz^{j}$, such that 
\begin{equation}
\int_{0}^{T}P\left(d(\bar{L}\tilde{W}^{N})_{u}\right)\neq\int_{0}^{T}P\left(d(\bar{L}\tilde{Z}^{N})_{u}\right).\label{eq: separation by polynomial}
\end{equation}

On the other hand, it is not hard to see from the shuffle product
formula that the complex signature
of $\bar{L}\tilde{W}^{N}$ as a bounded variation path over $\mathbb{C}^{n}$
is a function of the complex signature of $w$ as a bounded
variation path over $V_{\mathbb{C}}$ (c.f. \cite{BGLY16}, Lemma 4.2 and Lemma 4.3 for the more general rough path case). The same is true for $\bar{L}\tilde{Z}^{N}.$
Since $w$ and $z$ have the same complex signature, we conclude that  $\bar{L}\tilde{W}^{N}$
and $\bar{L}\tilde{Z}^{N}$ have the same complex signature. But this leads to a contradiction with
(\ref{eq: separation by polynomial}) since the integral is a function of the complex signature according to the shuffle product
formula again. 

Therefore,  the path $w$ has to be tree-like and the proof of Theorem
\ref{thm: nonvanishing} in the case when $d$ is odd is complete.

\end{proof}
\begin{rem}
The above separation property by holomorphic polynomial one forms relies crucially
on the feature that we are having a real path $w$ and its complex
rotation $z=\lambda\cdot w$ (or more precisely, a real path $\tilde{W}^{N}$
and its complex dilation $\tilde{Z}^{N}=\delta_{\lambda}(\tilde{W}^{N})$).
A similar separation property for two general complex paths is highly
non-trivial, and to our best knowledge this question is not fully understood in the literature.
We expect that a proper understanding on this question will be an essential
ingredient if one wants to investigate the uniqueness problem for signature over the complex field.
\end{rem}

\section*{Acknowledgement}

The authors would like to thank Professor Terry Lyons for his helpful discussions and advices. The authors would also like to thank the referee for his/her helpful suggestions, and in particular a very insightful comment which leads to significant enhancement of the analysis developed in the complexification argument. The first author is supported by EPSRC grant EP/R008205/1. The second author is supported by NSF grant DMS1814147.

\section*{Appendix}

In this section, for completeness we give an independent proof of Lemma \ref{lem: additive subset contained in an ideal}.

\begin{proof}[Proof of Lemma \ref{lem: additive subset contained in an ideal}]

Fix some $n\in A$. Apparently $n\geqslant2,$
otherwise by the additivity assumption, we have $A=\mathbb{Z}_{+}$
which is a contradiction. 

For each $i\in\mathbb{Z}_{n}$ (the integer group modulo $n$), denote
\[
[i]\triangleq\{i,i+n,i+2n,i+3n,\cdots\}.
\]
Let $G\subseteq\mathbb{Z}_{n}$ be the collection of elements $i$
such that $[i]\cap A\neq\emptyset$. We claim that $G$ is a subgroup
of $\mathbb{Z}_{n}.$ Indeed, for $i,j\in G,$ if both of $i+kn$
and $j+ln$ belong to $A$ for some $k$ and $l,$ by assumption we
see that $i+j+(k+l)n$ belongs to $A$. Thus $G$ is closed under
addition. Moreover, by the same reason, the inverse of $i\in G$,
which is the congruence class of $(n-1)i,$ also belongs to $G$.
Therefore, $G$ is a subgroup of $\mathbb{Z}_{n}.$

Note that $\mathbb{N}=\cup_{i\in\mathbb{Z}_{n}}[i]$ and $\mathbb{N}\backslash A$
is an infinite set, so there must exist some $i_{0}\in\mathbb{Z}_{n}$
such that $[i_{0}]\cap(\mathbb{N}\backslash A)$ is an infinite set.
However, since $n\in A,$ if $i_{0}+kn\in\mathbb{N}\backslash A,$
by the additivity assumption we see that $i_{0}+(k-1)n\in\mathbb{N}\backslash A$.
Therefore, we conclude that $[i_{0}]\subseteq\mathbb{N}\backslash A$
and thus $G$ is a proper subrgoup of $\mathbb{Z}_{n}.$ 

Since $\mathbb{Z}_{n}$ is cyclic, as a subgroup $G$ must also be cyclic. Let $d\triangleq\min\{i:i\in G\}\geqslant2.$
Then $d$ is a generator of $G$ in $\mathbb{Z}_{n}$ and $d$ is
a common divisor of all elements in $G.$ Note that $d$ divides $n$
by Language's theorem. Therefore, we conclude that
\[
A\subseteq\bigcup_{i\in G}[i]\subseteq(d).
\]

\end{proof}

\end{document}